\DeclareMathAlphabet{\mathbit}{OML}{cmm}{b}{it}
\newtheorem{Theorem}{Theorem}[section]
\newtheorem{Lemma}[Theorem]{Lemma}
\newtheorem{Corollary}[Theorem]{Corollary}
\newtheorem{Observation}[Theorem]{Observation}
\newtheorem{Proposition}[Theorem]{Proposition}
\newtheorem{Example}[Theorem]{Example}
\newtheorem{Remark}[Theorem]{Remark}
\newcommand{\bv}[1]{\mathbf{#1}}
\newcommand{\brackets}[2]{\left[#1 : #2\right]}
\newcommand{\eps}{\varepsilon}
\newcommand{\mgeq}{\succeq}
\newcommand{\dcup}{\overset{\cdot}{\cup}}
\newcommand{\bigdcup}{\dot \bigcup \,}
\newcommand{\cs}[1]{\,\langle K_{#1} \rangle\,}
\newcommand{\rank}{\operatorname{rank}}
\newcommand{\range}{\operatorname{range}}
\newcommand{\mr}{\operatorname{mr}}
\newcommand{\vspan}{\operatorname{span}}
\newcommand{\xir}[1]{\xi_{[#1]}}
\newcommand{\mrr}[1]{\mr_{[#1]}}
\newcommand{\Hr}[1]{\mathcal{H}_{[#1]}}
\title{Orthogonal Representations, Projective Rank, and\break Fractional Minimum Positive Semidefinite Rank:\break Connections and New Directions}
\author{
Leslie Hogben\thanks{American Institute of Mathematics, 600 E. Brokaw Rd., San Jose, CA 95112, USA (hogben@aimath.org).}\phantom{$^*$}\thanks{Department of Mathematics, Iowa State University, Ames, IA 50011, USA (\{LHogben,kpalmow\}@iastate.edu).}
\and Kevin F. Palmowski\footnotemark[2]
\and David E. Roberson\thanks{Division of Mathematical Sciences, Nanyang Technological University, SPMS-MAS-03-01, 21 Nanyang Link, Singapore 637371 (droberson@ntu.edu.sg).} 
\and Simone Severini\thanks{Department of Computer Science, University College London, Gower Street, London WC1E 6BT, United Kingdom (simoseve@gmail.com).}
}
\date{\today}
\begin{document}

\maketitle

\begin{abstract}
Fractional minimum positive semidefinite rank is defined from $r$-fold faithful orthogonal representations and it is shown that the projective rank of any graph equals the fractional minimum positive semidefinite rank of its complement. An $r$-fold version of the traditional definition of minimum positive semidefinite rank of a graph using Hermitian matrices that fit the graph is also presented. This paper also introduces $r$-fold orthogonal representations of graphs and formalizes the understanding of projective rank as fractional orthogonal rank. Connections of these concepts to quantum theory, including Tsirelson's problem, are discussed.
\end{abstract}

{\small

\textbf{Key words.} projective rank, orthogonal representation, minimum positive semidefinite rank, fractional, Tsirelson's problem, graph, matrix

\vspace{10pt}
\textbf{Subject classifications.} 15B10, 05C72, 05C90, 15A03, 15B57, 81P45
}

\section{Introduction}

This paper deals with fractional versions of graph parameters defined by orthogonal representations, including minimum positive semidefinite rank. In Section \ref{sec-OSR-and-PR}, we extend the existing idea of an orthogonal representation for a graph via a ``higher-dimensional" construction. With this, we  introduce a new parameter, $r$-fold orthogonal rank, that is to orthogonal rank as $b$-fold chromatic number is to chromatic number (see Section \ref{sec-intro-frac} for the definition of $b$-fold chromatic number and other terms related to fractional chromatic number). This allows us to formally characterize projective rank as ``fractional orthogonal rank," a concept that was previously understood (e.g., in \cite{Robersonthesis, MR12}) but not rigorously presented  (formal definitions of projective rank and other parameters are given in Section \ref{sec-intro-defs}).

In Section \ref{sec-frac-PSD-mr}, we apply this ``fractionalization" process to the minimum positive semidefinite rank problem (viewed via faithful orthogonal representations) and develop two new graph parameters, namely, $r$-fold and fractional minimum positive semidefinite rank. We also provide an alternate definition of $r$-fold minimum positive semidefinite rank that is based on the minimum rank of a matrix that ``$r$-fits" a graph, allowing us to view the ``higher-dimensional" problem through either of the two viewpoints traditionally associated with the classical minimum positive semidefinite rank  problem.

Our final result, found in Section \ref{subsec-equality}, shows that the fractional minimum positive semidefinite rank of a graph is equal to the projective rank of the complement of the graph. This result serves to connect the two seemingly different problems; moving forward, this will allow the extensive existing literature on minimum positive semidefinite rank to be used to inform new developments in the more recently introduced area of projective rank.

In the remainder of this introduction we discuss applications of the fractional parameters discussed (Section \ref{sec-intro-apps}), give a brief introduction to the fractional approach to chromatic number to motivate our definitions (Section  \ref{sec-intro-frac}), and provide necessary notation and terminology (Section \ref{sec-intro-defs}).

\subsection{Applications} \label{sec-intro-apps}

Linear algebraic structures and associated graph theoretic frameworks  have  recently become more important tools to
study the fundamental differences that characterize theories of nature, like
classical mechanics, quantum mechanics, and general probabilistic theories.
Matrices, graphs, and their related combinatorial optimization techniques turn out to
provide a surprisingly general language with which to approach questions connected with
foundational ideas, such as the analysis of contextual inequalities and non-local
games \cite{cabello, cameron}, and with concrete aspects, such as quantifying
various capacities of entanglement-assisted channels \cite{duan, leung}, and
the overhead needed to classically simulate quantum computation \cite{emerson}.

A point of strength of such frameworks is their ability to reformulate
mathematical questions in a coarser manner that is nonetheless effective, in
some cases, to single out specific facts. Tsirelson's problem \cite{tsirelson}
provides a remarkable example: deciding whether the mathematical models of
non-relativistic quantum mechanics, where observers have linear operators acting on a
finite dimensional tensor product space, and algebraic quantum field theory,
where observers have commuting linear operators on a single (possibly infinite dimensional) space, produce the same set of correlations. We know that if Tsirelson's
problem has a positive answer then the notorious Connes' Embedding conjecture
\cite{capraro, ozawa}, originally concerned with an approximation property for
finite von Neumann algebras, is true.

Tsirelson's problem can be seen from a combinatorial matrix point of view by working with
graphs and their associated algebraic structures \cite{paulsen1}. Roughly
speaking, instead of constructing sets of correlation matrices, we can try
looking for various patterns of zeroes in the sets, as in the spirit of
combinatorial matrix theory. The projective rank, denoted $\xi_{f}$, 
is a
recently introduced graph parameter with the potential for settling the above
discussion. Indeed, it has been shown that if there exists a graph whose
projective rank is irrational, then Tsirelson's problem has a negative answer
\cite{paulsen2}.

Projective representations and projective rank were originally defined in \cite{MR12} as a tool for studying quantum colorings and quantum homomorphisms of graphs. Quantum colorings and the quantum chromatic number give quantitative measures of the advantage that quantum entanglement provides in performing distributed tasks and in distinguishing scenarios related to classical and quantum physics, respectively. In fact, the existence of a quantum $n$-coloring for a given graph is equivalent to the existence of a projective representation of value $n$ for the Cartesian product of the graph with a complete graph on $n$ vertices.

It was also shown in \cite{MR12} that projective rank is monotone with respect to quantum homomorphisms, i.e., if there exists a quantum homomorphism from a graph $G$ to a graph $H$, then $\xi_f(G) \leq \xi_f(H)$. This shows that projective rank is a  lower bound for quantum chromatic number, and more generally provides a method for forbidding the existence of quantum homomorphisms. Indeed, this approach was used to determine the quantum odd girth of the Kneser graphs in \cite{Robersonthesis}. Projective rank has also been studied from a purely graph theoretic point of view, and in \cite{Stahlke14} it was shown that this parameter is multiplicative with respect to the lexicographic and disjunctive graph products. Using this fact the authors were able to find a separation between quantum chromatic number and a recently defined semidefinite relaxation of this parameter, answering a question posed in \cite{paulsen1}.

This paper takes a linear algebraic approach to these questions, building connections between recent graph theoretical approaches to quantum questions and existing literature on orthogonal representations and minimum positive semidefinite rank.

\subsection{A fractional approach} \label{sec-intro-frac}

To demonstrate the fractional approach that we use with orthogonal representations and minimum positive semidefinite rank, consider the following derivation of the fractional chromatic number as found in \cite{fgt}. The \emph{chromatic number} $\chi(G)$ of a graph $G$ is the least number $c$ such that $G$ can be colored with $c$ colors; that is, we can assign to each vertex of $G$ one of $c$ colors in such a way that adjacent vertices receive different colors. A coloring with $c$ colors can be generalized to a \emph{$b$-fold coloring with $c$ colors}, or a $c$:$b$-coloring: from a palette of $c$ colors, assign $b$ colors to each vertex of $G$ such that adjacent vertices receive disjoint sets of colors. For a fixed $b$, the \emph{$b$-fold chromatic number} of $G$, $\chi_b(G)$, is the smallest $c$ such that G has a $c$:$b$-coloring. With this, we can define the \emph{fractional chromatic number} of $G$ as
\[ \chi_f(G) = \inf_b \frac{\chi_b(G)}{b} \, . \]
While it is not obvious, it can be shown that $\chi_f(G)$ is always a rational number, as there is an alternative linear programming formulation for the parameter for which strong duality holds. For further information on fractional coloring, including a time-scheduling interpretation of the problem, see the discussions in the Preface and Chapter 3 of \cite{fgt}.

The process of assigning objects to the vertices of a graph, subject to certain constraints, is a key element common to the problems we examine in this work, and the procedure of generalizing from assigning one object to assigning $b$-many objects (or, in our case, $b$-dimensional or rank-$b$ objects) is an underlying theme. At each stage of the process, we are interested in graph parameters that give information about the ``most efficient" set of objects we can use, with the end goal of developing fractional versions of existing parameters (in the spirit of \cite{fgt}) and connecting the more recent work on projective rank with existing ideas from the realm of minimum positive semidefinite rank.

Rather than the colors used for coloring problems, the objects that we assign to the vertices of a graph are vectors and matrices, which adds a distinctly linear algebraic flavor to both the problems and the constraints: the idea of ``different colors" translates to orthogonality conditions on our objects. As such, our results often see linear algebra and graph theory working hand-in-hand, with structure found in one discipline influencing results that are based in the other.

\subsection{Background, definitions, and notation} \label{sec-intro-defs}

The natural numbers, $\mathbb{N}$, start at 1. We use the notation $\brackets{a}{b}$ to denote the set of integers $\{a, a+1, \ldots, b-1, b\}$. Throughout, $d$ and $r$ are used to represent natural numbers. Vectors are denoted by boldface font, typically $\bv{x}$, and matrices are capital letters, typically $A$, $B$, $P$, or $X$, depending on context. The symbol $0$ denotes either the scalar zero or a zero matrix, and an identity matrix is denoted by $I$; any of these may be subscripted to clarify their sizes. We follow the usual convention of denoting the $j^{th}$ standard basis vector in $\mathbb{C}^d$ (for some $d$) as $\bv{e}_j$. Rows and columns of matrices may be indexed either by natural numbers or by vertices of a graph, depending on context. The elements of a matrix $A$ are denoted $a_{ij}$; if $A$ is a block matrix, then its blocks are denoted $A_{ij}$. Graphs are usually denoted by $G$ or $H$, vertices by $u, v$ or $i, j$, and edges by $uv$ or $ij$.

If $A \in \mathbb{C}^{p \times p}$ and $B \in \mathbb{C}^{q \times q}$, then the \emph{direct sum} of $A$ and $B$, denoted $A \oplus B$, is the block diagonal matrix 
\[ \left[ \begin{array}{cc} A & 0 \\ 0 & B \end{array} \right] \in \mathbb{C}^{(p+q) \times (p+q)}. \]
We denote the conjugate transpose of $A$ by $A^*$. A \emph{Hermitian} matrix satisfies $A = A^*$. A Hermitian matrix $A \in \mathbb{C}^{n \times n}$ is \emph{positive semidefinite}, denoted $A \mgeq 0$, if $\bv{x}^* A \bv{x} \geq 0$ for all $\bv{x} \in \mathbb{C}^n$, or equivalently, if all of its eigenvalues are nonnegative.

Typically, $G = (V, E)$ will denote a simple undirected graph on $n$ vertices, where $V = V(G)$ is the set of vertices of $G$ and $E = E(G)$ is the set of edges of $G$. An \emph{isolated vertex} is a vertex that is not adjacent to any other vertex of $G$. A \emph{subgraph} of a graph $G$ is a graph $H$ such that $V(H) \subseteq V(G)$ and $E(H) \subseteq E(G)$. An \emph{induced subgraph} of a graph $G$, denoted $G[W]$ for some set $W \subseteq V(G)$, is a subgraph with vertex set $W$ such that if $u,v \in W$ and $uv \in E(G)$, then $uv \in E(G[W])$. The \emph{union} of graphs $G$ and $H$, denoted $G \cup H$, is the graph with vertex set $V(G \cup H) = V(G) \cup V(H)$ and edge set $E(G \cup H) = E(G) \cup E(H)$. If $V(G) \cap V(H) = \emptyset$, then this union is \emph{disjoint} and denoted $G \dcup H$. The \emph{complement} of $G$, denoted $\overline{G}$, is the graph with $V(\overline{G}) = V(G)$ and $E(\overline{G}) = \{ uv : u \neq v, uv \notin E(G) \}$. An \emph{independent set} in $G$ is a set $W \subseteq V(G)$ such that if $u, v \in W$, then $uv \notin E(G)$. The \emph{independence number} of $G$, denoted $\alpha(G)$, is the largest possible cardinality of an independent set in $G$. A \emph{clique} in $G$ is an induced subgraph $H$ that is a complete graph, i.e., $uv \in E(H)$ for every $u,v \in V(H)$. The \emph{clique number} of $G$, denoted $\omega(G)$, is the largest possible order of a clique in $G$. A \emph{clique-sum} of graphs $G$ and $H$ on $K_t$, i.e., the graph $G \cup H$ where $G \cap H = K_t$, is denoted by $G \cs{t} H$; this is also called a $t$-clique-sum of $G$ and $H$. A \emph{chordal graph} is a graph that does not have any induced cycles of length greater than 3; any chordal graph can be constructed as clique-sum(s) of complete graphs. A \emph{perfect graph} is a graph $G$ for which every induced subgraph $H$ of $G$ satisfies $\omega(H) = \chi(H)$. A \emph{cut-vertex} of a connected graph $G$ is a vertex whose deletion disconnects $G$. A graph with a cut-vertex can be viewed as a $1$-clique-sum.

We work in the vector space $\mathbb{C}^d$ for some $d \in \mathbb{N}$. We use $S$ to denote a subspace of a vector space. A \emph{basis matrix} for an $r$-dimensional subspace $S$ of $\mathbb{C}^d$ is a matrix $X \in \mathbb{C}^{d \times r}$ that has orthonormal columns and satisfies $S = \range(X)$. We say that two subspaces $S_1$ and $S_2$ of $\mathbb{C}^d$ are \emph{orthogonal}, denoted $S_1 \perp S_2$, if $\bv{u}_1^* \bv{u}_2 = 0$ for all $\bv{u}_1 \in S_1$ and all $\bv{u}_2 \in S_2$; an equivalent condition is that $X_1^* X_2 = 0$, where $X_1$ and $X_2$ are basis matrices for $S_1$ and $S_2$, respectively.

Given some graph $G$ and $d \in \mathbb{N}$, an \emph{orthogonal representation} in $\mathbb{C}^d$ for $G$ is a set of unit vectors $\{\bv{x}_u\}_{u \in V(G)} \subset \mathbb{C}^d$ such that $\bv{x}_u^* \bv{x}_v = 0$ if $uv \in E(G)$. It is clear that such a representation always exists for $d = |V(G)|$. Provided that $G$ has at least one edge, it is clear that such a representation cannot be made for $d = 1$. We define the \emph{orthogonal rank} of $G$ to be
\[ \xi(G) = \min\left\{ d : G \text{ has an orthogonal representation in $\mathbb{C}^d$} \right\}. \]

Let $d, r \in \mathbb{N}$ with $r \leq d$. A \emph{$d/r$-projective representation}, or $d/r$-representation, is an assignment of matrices $\{P_u\}_{u \in V(G)}$ to the vertices of $G$ such that
\begin{itemize}
\item for each $u \in V(G)$, $P_u \in \mathbb{C}^{d \times d}$, $\rank P_u  = r$, $P_u^* = P_u$, and $P_u^2 = P_u$; and
\item if $uv \in E(G)$, then $P_u P_v = 0$.
\end{itemize}
In words, a $d/r$-representation is an assignment of rank-$r$ $(d \times d)$ orthogonal projection matrices (projectors) to the vertices of $G$ such that adjacent vertices receive projectors that are orthogonal. The \emph{projective rank} of $G$ is defined as
\[ \xi_f(G) = \inf_{d,r} \left\{ \frac{d}{r} : G \text{ has a $d/r$-representation} \right\}. \]
Projective rank was first introduced in 2012 by Roberson and Man\v cinska, where it is noted that $\xi_f(G) \leq \xi(G)$; see \cite{Robersonthesis} and \cite{MR12} for additional information, properties, and applications.

Complementary to the idea of an orthogonal representation is that of a \emph{faithful} orthogonal representation (here we follow the complementary usage in the minimum rank literature). In order for the definitions given next to coincide with those in the minimum rank literature, we must assume that the graph $G$ has no isolated vertices. A \emph{faithful orthogonal representation} in $\mathbb{C}^d$ for a graph $G$ is a set of unit vectors $\{ \bv{x}_u \}_{u \in V(G)} \subset \mathbb{C}^d$ such that $\bv{x}_u^*\bv{x}_v = 0$ if and only if $uv \notin E(G)$. We define the \emph{minimum positive semidefinite rank} of $G$ as
\begin{equation} \label{def-mrplus}
\mr^+(G) = \min \left\{ d : G \text{ has a faithful orthogonal representation in $\mathbb{C}^d$} \right\}.
\end{equation}

We say that a matrix $A \in \mathbb{C}^{n \times n}$ \emph{fits} the order-$n$ graph $G$ if $a_{ii} = 1$ for all $i \in \brackets{1}{n}$, and for all $i \neq j$, we have $a_{ij} = 0$ if and only if $ij \notin E(G)$. Let $\mathcal{H}^+ (G) = \left\{ A \in \mathbb{C}^{n \times n} : A \mgeq 0 \text{ and $A$ fits $G$} \right\}$. A faithful orthogonal representation in $\mathbb{C}^d$ for $G$ corresponds to a matrix $A \in \mathcal{H}^+(G)$ with $\rank A \leq d$, and a matrix $A \in \mathcal{H}^+(G)$ with rank $d$ can be factored as $A = B^* B$ for some $B \in \mathbb{C}^{d \times n}$. Thus an alternate characterization (see, e.g., \cite{HLA2ch46}) of $\mr^+(G)$ is
\[ \mr^+(G) = \min\{ \rank A : A \in \mathcal{H}^+(G) \}, \]
(and in fact, this is the customary definition of this parameter).

The definitions and explanation given here coincide with those in the literature provided that the graph $G$ has no isolated vertices. The most common definition of $\mathcal{H}^+(G)$ in the literature does not contain the assumption that $a_{ii}=1$. If vertex $i$ is adjacent to at least one other vertex, then properties of positive semidefinite matrices require $a_{ii} > 0$, and so $A$ can be scaled by a positive diagonal congruence to a matrix of the same rank and nonzero pattern that has all diagonal entries equal to one. However, consider the case where $G$ consists of $n$ isolated vertices (no edges): then as defined in \cite{BHH11, HLA2ch46}, etc., $\mr^+(G) = 0$, whereas with our definition $\mr^+(G) = n$. The two definitions of minimum positive semidefinite rank coincide precisely when $G$ has no isolated vertices. Our definition facilitates connections to the use of orthogonal rank in the study of quantum issues, and the assumption of no isolated vertices is needed only when connecting to the minimum rank literature, so we omit it except when discussing connections to such work (where we state either this assumption or one that implies it, such as the graph being connected and of order at least two). We also note that for any graph the values of the parameters studied can be computed from their values on the connected components of the graph (see Section \ref{sec-frac-PSD-mr}), which facilitates handling cases with isolated vertices separately.


\section{Orthogonal subspace representations and projective rank} \label{sec-OSR-and-PR}

In this section, we introduce and discuss $(d;r)$ orthogonal subspace representations for a graph $G$, which are extensions of orthogonal representations in the spirit of fractional graph theory \cite{fgt}. The $r$-fold orthogonal rank of a graph, $\xir{r}(G)$, is defined and some properties of this quantity are examined. We then relate these representations to $d/r$-projective representations and tie projective rank into the new theory, formalizing the existing understanding that projective rank and ``fractional orthogonal rank" are one and the same.

Unless otherwise specified, all matrices and vectors in this section are assumed to be complex-valued.

\subsection{Orthogonal subspace representations and $\mathbit{r}$-fold orthogonal rank}

Let $G$ be a graph and let $d, r \in \mathbb{N}$ with $d \geq r$. A \emph{$(d;r)$ orthogonal subspace representation}, or $(d;r)$-OSR, for $G$ is a set of subspaces $\{S_u\}_{u \in V(G)}$ such that 
\begin{itemize}
\item for each $u \in V(G)$, $S_u$ is an $r$-dimensional subspace of $\mathbb{C}^d$; and
\item if $uv \in E(G)$, then $S_u \perp S_v$.
\end{itemize}

The \emph{$r$-fold orthogonal rank} of a graph $G$ is defined by
\[ \xir{r}(G) = \min \left\{ d : G \text{ has a $(d;r)$ orthogonal subspace representation} \right\}. \]
An orthogonal representation in $\mathbb{C}^d$ naturally generates a $(d;1)$ orthogonal subspace representation, and vice versa, so $\xi(G) = \xir{1}(G)$.

We now explore some properties of $\xir{r}(G)$.

\begin{Lemma} \label{xirsubadditive}
$\xir{r}$ is a subadditive function of $r$, i.e., for every graph $G$ and all $r, s \in \mathbb{N}$, 
\[ \xir{r+s}(G) \leq \xir{r}(G) + \xir{s}(G). \]
\end{Lemma}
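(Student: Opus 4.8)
The plan is to construct a $(d_r + d_s; r+s)$-OSR for $G$ directly from a minimum $(d_r; r)$-OSR and a minimum $(d_s; s)$-OSR, where $d_r = \xir{r}(G)$ and $d_s = \xir{s}(G)$. Let $\{S_u\}_{u \in V(G)}$ be a $(d_r; r)$-OSR in $\mathbb{C}^{d_r}$ and $\{T_u\}_{u \in V(G)}$ be a $(d_s; s)$-OSR in $\mathbb{C}^{d_s}$. The natural candidate is to set $R_u = S_u \oplus T_u \subseteq \mathbb{C}^{d_r} \oplus \mathbb{C}^{d_s} = \mathbb{C}^{d_r + d_s}$, i.e., the subspace spanned by $\{(\bv{x}, 0) : \bv{x} \in S_u\} \cup \{(0, \bv{y}) : \bv{y} \in T_u\}$.

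The key steps are then routine verifications. First, $\dim R_u = \dim S_u + \dim T_u = r + s$, since the two summands meet only at the zero vector (they live in orthogonal coordinate blocks), so $R_u$ is an $(r+s)$-dimensional subspace of $\mathbb{C}^{d_r + d_s}$, as required. Second, if $uv \in E(G)$, then $S_u \perp S_v$ in $\mathbb{C}^{d_r}$ and $T_u \perp T_v$ in $\mathbb{C}^{d_s}$; I would check that this forces $R_u \perp R_v$. An arbitrary vector of $R_u$ has the form $(\bv{a}, \bv{b})$ with $\bv{a} \in S_u$, $\bv{b} \in T_u$, and similarly $(\bv{c}, \bv{d})$ for $R_v$; their inner product is $\bv{a}^* \bv{c} + \bv{b}^* \bv{d} = 0 + 0 = 0$, using orthogonality in each block. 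Hence $\{R_u\}_{u \in V(G)}$ is a $(d_r + d_s; r+s)$-OSR for $G$, which gives $\xir{r+s}(G) \leq d_r + d_s = \xir{r}(G) + \xir{s}(G)$.

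I do not anticipate a serious obstacle; the only point requiring a little care is confirming that the direct sum of subspaces in a direct sum of spaces has dimension equal to the sum of the dimensions (equivalently, that a basis matrix for $R_u$ can be taken as the block diagonal $\begin{bmatrix} X_u & 0 \\ 0 & Y_u \end{bmatrix}$ where $X_u, Y_u$ are basis matrices for $S_u, T_u$), and that the orthogonality condition on basis matrices, $X_u^* X_v = 0$ and $Y_u^* Y_v = 0$, combines to give the block form $\begin{bmatrix} X_u & 0 \\ 0 & Y_u \end{bmatrix}^* \begin{bmatrix} X_v & 0 \\ 0 & Y_v \end{bmatrix} = 0$. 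Working with basis matrices rather than the subspaces themselves may be the cleanest way to present this, since it makes the direct-sum bookkeeping explicit and ties directly into the notation already established in the background section.
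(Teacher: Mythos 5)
Your proposal is correct and follows essentially the same route as the paper's proof: both form the block-diagonal direct sum of basis matrices for the $r$-fold and $s$-fold representations and verify dimension and orthogonality blockwise. No gaps.
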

\begin{proof}
Let $d_r = \xir{r}(G)$ and $d_s = \xir{s}(G)$. Then $G$ has a $(d_r ;r)$ orthogonal subspace representation containing $r$-dimensional subspaces of $\mathbb{C}^{d_r}$, say $\{S^r_u\}_{u \in V(G)}$, and a $(d_s; s)$ orthogonal subspace representation containing $s$-dimensional subspaces of $\mathbb{C}^{d_s}$, say $\{S^s_u\}_{u \in V(G)}$. We show by construction that there exists an orthogonal subspace representation for $G$ containing $(r+s)$-dimensional subspaces of $\mathbb{C}^{d_r + d_s}$.

For each $u \in V(G)$, let $X_u^r \in \mathbb{C}^{d_r \times r}$ and $X_u^s \in \mathbb{C}^{d_s \times s}$ be basis matrices for $S_u^r$ and $S_u^s$, respectively. Define
\[ X_u = \left[ \begin{array}{cc} X_u^r & 0_{d_r \times s} \\ 0_{d_s \times r} & X_u^s \end{array} \right] \in \mathbb{C}^{(d_r+d_s) \times (r+s)} \]
and let $S_u = \range(X_u)$. We immediately see that $S_u$ is a subspace of $\mathbb{C}^{d_r + d_s}$, $X_u$ is a basis matrix for $S_u$, and $\dim(S_u) = \rank X_u = \rank X_u^r + \rank X_u^s = r+s$.

Suppose $u, v \in V(G)$ and let $X_u^r$, $X_v^r$, $X_u^s$, $X_v^s$, $X_u$, and $X_v$ be as above; then
\[ X_u^* X_v = \left[ \begin{array}{cc}
	(X_u^r)^*(X_v^r) & 0 \\
	0 &  (X_u^s)^*(X_v^s)
\end{array} \right]. \]
Suppose $uv \in E(G)$. Since $\{S_u^r\}$ is an orthogonal subspace representation, we have $(X_u^r)^*(X_v^r) = 0$; similarly, $(X_u^s)^*(X_v^s) = 0$, so $X_u^* X_v = 0$. Since $X_u$ and $X_v$ are basis matrices for $S_u$ and $S_v$, respectively, we conclude that if $uv \in E(G)$, then $S_u \perp S_v$.

Thus $\{S_u\}_{u \in V(G)}$ is a $(d_r + d_s; r+s)$ orthogonal subspace representation for $G$, so $\xir{r+s}(G) \leq d_r + d_s = \xir{r}(G) + \xir{s}(G)$. \qquad
\end{proof}

\begin{Corollary}
For every graph $G$ and all $r \in \mathbb{N}$, $\frac{\xir{r}(G)}{r} \leq \xi(G)$.
\end{Corollary}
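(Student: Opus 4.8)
The plan is to derive this directly from Lemma \ref{xirsubadditive} by induction on $r$. The base case $r=1$ is immediate from the observation, noted just before the lemma, that $\xir{1}(G) = \xi(G)$, so $\frac{\xir{1}(G)}{1} = \xi(G) \leq \xi(G)$.

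For the inductive step, suppose $\frac{\xir{r-1}(G)}{r-1} \leq \xi(G)$ for some $r \geq 2$; equivalently, $\xir{r-1}(G) \leq (r-1)\xi(G)$. Applying subadditivity with $s = 1$ gives $\xir{r}(G) \leq \xir{r-1}(G) + \xir{1}(G) \leq (r-1)\xi(G) + \xi(G) = r\,\xi(G)$, and dividing by $r$ yields $\frac{\xir{r}(G)}{r} \leq \xi(G)$, completing the induction.

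There is no real obstacle here — the corollary is a routine consequence of subadditivity combined with the identification $\xir{1} = \xi$, much as the analogous bound $\chi_b/b \leq \chi$ follows for $b$-fold chromatic number. The only thing to be slightly careful about is invoking $\xir{1}(G) = \xi(G)$ explicitly (rather than the weaker $\xir{1}(G) \leq \xi(G)$, which would also suffice), and making sure the induction is set up on $r$ with $s$ fixed to $1$ rather than trying to split $r$ arbitrarily. Alternatively, one could phrase the same argument non-inductively: write $\xir{r}(G) = \xir{1+1+\cdots+1}(G) \leq r\,\xir{1}(G) = r\,\xi(G)$ by repeated application of Lemma \ref{xirsubadditive}, which is perhaps the cleanest presentation.
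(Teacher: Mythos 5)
Your argument is correct and is essentially identical to the paper's proof, which also iterates Lemma \ref{xirsubadditive} with $s=1$ together with the identity $\xir{1}(G)=\xi(G)$ to obtain $\xir{r}(G)\leq r\cdot\xi(G)$. Nothing is missing.
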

\begin{proof}
Since $\xir{1}(G) = \xi(G)$, we have
\[ \xir{r}(G) \leq \xir{r-1}(G) + \xi(G) \leq \ldots \leq r \cdot \xi(G) . \qedhere\]
\end{proof}

\begin{Observation} \label{xir-geq-omega}
For every graph $G$ and all $r \in \mathbb{N}$, $\xir{r}(G) \geq r \cdot \omega(G)$.
\end{Observation}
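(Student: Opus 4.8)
The plan is to reduce the claim about a general graph $G$ to the case of a complete graph via an induced subgraph argument. First I would observe that $r$-fold orthogonal rank is monotone under taking induced subgraphs: if $H = G[W]$ and $\{S_u\}_{u \in V(G)}$ is a $(d;r)$-OSR for $G$, then $\{S_u\}_{u \in W}$ is a $(d;r)$-OSR for $H$, so $\xir{r}(H) \le \xir{r}(G)$. Applying this with $W$ a maximum clique of $G$ (so $H = K_{\omega(G)}$), it suffices to prove that $\xir{r}(K_\omega) \ge r \cdot \omega$, i.e. the bound for complete graphs.

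For the complete graph $K_\omega$, the key point is that in any $(d;r)$-OSR, \emph{all} $\binom{\omega}{2}$ pairs of subspaces are pairwise orthogonal, since every pair of distinct vertices is adjacent. A direct-sum/dimension-counting argument then finishes it: if $S_{u_1}, \dots, S_{u_\omega}$ are pairwise orthogonal $r$-dimensional subspaces of $\mathbb{C}^d$, then their (internal) direct sum $S_{u_1} \oplus \cdots \oplus S_{u_\omega}$ is a subspace of $\mathbb{C}^d$ of dimension $r\omega$, which forces $d \ge r\omega$. Concretely, one can stack basis matrices $X_{u_1}, \dots, X_{u_\omega}$ into a single $d \times r\omega$ matrix $[\,X_{u_1} \mid \cdots \mid X_{u_\omega}\,]$; pairwise orthogonality of the subspaces means $X_{u_i}^* X_{u_j} = 0$ for $i \ne j$, and since each $X_{u_i}$ has orthonormal columns, this combined matrix has orthonormal columns, hence rank $r\omega$, which cannot exceed $d$.

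I expect no serious obstacle here; the only things to be careful about are the degenerate cases. If $\omega(G) = 1$ (i.e. $G$ has no edges), the inequality $\xir{r}(G) \ge r$ still holds trivially since each $S_u$ is an $r$-dimensional subspace of $\mathbb{C}^d$, forcing $d \ge r$. The argument also implicitly uses that $G$ has at least one vertex so that $\omega(G) \ge 1$; if one wishes to allow the empty graph the statement is vacuous. Thus the proof is essentially: monotonicity under induced subgraphs, plus the observation that pairwise-orthogonal $r$-dimensional subspaces consume $r$ dimensions each.
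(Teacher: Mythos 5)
Your proof is correct; the paper states this as an Observation without proof, and your argument (restrict to a maximum clique, then note that $\omega(G)$ pairwise orthogonal $r$-dimensional subspaces of $\mathbb{C}^d$ force $d \geq r\,\omega(G)$ by stacking basis matrices into a $d \times r\omega(G)$ matrix with orthonormal columns) is exactly the intended justification, which the paper itself echoes later in the proof of Lemma \ref{stdcliquerep}. Your handling of the degenerate case $\omega(G)=1$ is also fine.
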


\begin{Proposition} \label{xir-subgraph}
Let $r \in \mathbb{N}$ and let $H$ be a subgraph of $G$. Then $\xir{r}(H) \leq \xir{r}(G)$.
\end{Proposition}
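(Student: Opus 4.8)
The plan is to observe that a $(d;r)$ orthogonal subspace representation for $G$, when restricted to the vertices of the subgraph $H$, already satisfies all the constraints required for it to be a $(d;r)$ orthogonal subspace representation for $H$. Concretely, suppose $G$ has a $(d;r)$-OSR with $d = \xir{r}(G)$, say $\{S_u\}_{u \in V(G)}$, where each $S_u$ is an $r$-dimensional subspace of $\mathbb{C}^d$ and $S_u \perp S_v$ whenever $uv \in E(G)$. Since $V(H) \subseteq V(G)$, the collection $\{S_u\}_{u \in V(H)}$ consists of $r$-dimensional subspaces of $\mathbb{C}^d$.

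Next I would check the orthogonality condition. Let $uv \in E(H)$. Because $H$ is a subgraph of $G$, we have $E(H) \subseteq E(G)$, so $uv \in E(G)$, and therefore $S_u \perp S_v$. Hence $\{S_u\}_{u \in V(H)}$ is a $(d;r)$ orthogonal subspace representation for $H$, which gives $\xir{r}(H) \leq d = \xir{r}(G)$.

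There is no real obstacle here; the only thing to be slightly careful about is that $\xir{r}(G)$ is defined via a minimum, so we should fix a specific optimal representation for $G$ and then exhibit the restricted family as a (not necessarily optimal) representation for $H$, which immediately bounds $\xir{r}(H)$ from above. One could equivalently phrase the argument contrapositively in terms of $\xir{r}$ being monotone under taking subgraphs, but the direct restriction argument is cleanest.
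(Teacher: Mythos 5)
Your argument is correct and matches the paper's proof: both restrict a $(d;r)$-OSR for $G$ to the vertices of $H$ and use $E(H) \subseteq E(G)$ to verify the orthogonality condition. Your version just spells out the details that the paper leaves implicit.
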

\begin{proof}
Since every edge of $H$ is an edge of $G$, any $(d;r)$ orthogonal subspace representation for $G$ provides a $(d;r)$ orthogonal subspace representation for $H$, and the result is immediate. \qquad
\end{proof}

\begin{Proposition} \label{xirdisjointmax}
Suppose $r \in \mathbb{N}$ and $G = \bigdcup_{i=1}^t G_i$ for some graphs $\{G_i\}_{i=1}^t$. Then $\xir{r}(G) = \max_i \left\{ \xir{r}(G_i) \right\}$.
\end{Proposition}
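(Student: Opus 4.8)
The plan is to prove the two inequalities separately. For the inequality $\xir{r}(G) \geq \max_i \{\xir{r}(G_i)\}$, I would observe that each $G_i$ is a subgraph of $G$, so Proposition \ref{xir-subgraph} immediately gives $\xir{r}(G_i) \leq \xir{r}(G)$ for every $i$, and hence $\max_i \{\xir{r}(G_i)\} \leq \xir{r}(G)$. This direction is essentially free.

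For the reverse inequality $\xir{r}(G) \leq \max_i \{\xir{r}(G_i)\}$, I would set $D = \max_i \{\xir{r}(G_i)\}$ and construct a $(D;r)$ orthogonal subspace representation for $G$. For each $i$, let $d_i = \xir{r}(G_i) \leq D$, and take a $(d_i;r)$-OSR $\{S^i_u\}_{u \in V(G_i)}$ for $G_i$, viewing each $S^i_u$ as an $r$-dimensional subspace of $\mathbb{C}^{D}$ via the natural inclusion $\mathbb{C}^{d_i} \hookrightarrow \mathbb{C}^{D}$ (padding basis matrices with zero rows); this preserves both dimensions and orthogonality relations. Since the $G_i$ partition the vertex set of $G$, each $u \in V(G)$ lies in exactly one $G_i$, so I can define $S_u := S^i_u$ for that $i$. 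Now if $uv \in E(G)$, then since $G$ is a disjoint union, $u$ and $v$ lie in the same component $G_i$ and $uv \in E(G_i)$, whence $S_u = S^i_u \perp S^i_v = S_v$. Thus $\{S_u\}_{u \in V(G)}$ is a $(D;r)$-OSR for $G$, giving $\xir{r}(G) \leq D$.

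The only subtlety — and it is genuinely minor — is making sure the ``inclusion'' step is stated cleanly: an $r$-dimensional subspace of $\mathbb{C}^{d_i}$ with basis matrix $X \in \mathbb{C}^{d_i \times r}$ having orthonormal columns yields, after appending $D - d_i$ zero rows, a matrix $\widetilde{X} \in \mathbb{C}^{D \times r}$ that still has orthonormal columns, and for two such padded matrices $\widetilde{X}_u^* \widetilde{X}_v = X_u^* X_v$, so orthogonality of subspaces is unaffected. I expect no real obstacle here; the main point worth stating explicitly is that in a disjoint union every edge lies within a single component, which is what forces the orthogonality constraints of $G$ to be exactly the union of those of the $G_i$ and allows the component representations to be assembled with no interaction across components (unlike the direct-sum padding used in Lemma \ref{xirsubadditive}, here no extra dimensions beyond $D$ are needed).
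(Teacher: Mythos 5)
Your proposal is correct and follows essentially the same route as the paper: the lower bound via the subgraph monotonicity of Proposition \ref{xir-subgraph}, and the upper bound by padding each component's $(d_i;r)$-OSR with zero rows up to dimension $D=\max_i d_i$ and using the fact that every edge of a disjoint union lies within a single component. No gaps.
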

\begin{proof}
Since each $G_i$ is an induced subgraph of $G$, we have $\xir{r}(G_i) \leq \xir{r}(G)$ for each $i$, so $\max_i \left\{ \xir{r}(G_i) \right\} \leq \xir{r}(G)$.

\smallskip
For each $i \in \brackets{1}{t}$, let $d_i = \xir{r}(G_i)$ and let $d = \max_i \{d_i\}$.  Let $\{S_u^i\}_{u \in V(G_i)}$ be a $(d_i; r)$ orthogonal subspace representation for $G_i$ and for each vertex $u \in V(G_i)$ let $X_u^i \in \mathbb{C}^{d_i \times r}$ be a basis matrix for $S_u^i$. For each $u \in V(G)$, we have $u \in V(G_i)$ for some $i$; define
\[ S_u = \range \left[ \begin{array}{c} X_u^i \\ 0_{(d-d_i) \times r} \end{array} \right]. \]

Each $S_u$ is an $r$-dimensional subspace of $\mathbb{C}^d$, and if $uv \in E(G)$, then $uv \in E(G_k)$ for some $k$, so $S_u^k \perp S_v^k$, which implies that $S_u \perp S_v$ (by construction). Therefore, $\{ S_u \}_{u \in V(G)}$ is a $(d;r)$-OSR for $G$, so $\xir{r}(G) \leq d = \max_i \{ \xir{r}(G_i) \}$ and equality follows. \qquad
\end{proof}

This result does not hold for arbitrary graph unions, as the following example for the $r = 1$ case shows.

\begin{Example} {\rm
Let $G = C_5$ with $V(G) = \{1,2,3,4,5\}$ and $E(G) = \{12, 23, 34, 45, 51\}$. Define $G_1 = P_4$ with $V(G_1) = \{1,2,3,4\}$ and $E(G_1) = \{12, 23, 34 \}$ and define $G_2 = P_3$ with $V(G_2) = \{4,5,1\}$ and $E(G_2) = \{45, 51\}$. We see that $G = G_1 \cup G_2$, but since $\xi(P_3) = \xi(P_4) = 2$ and $\xi(C_5) = 3$, it is not true that $\xi(G) = \max\{ \xi(G_1), \, \xi(G_2)\}$.
}
\end{Example}

While the maximum property observed in Proposition \ref{xirdisjointmax} may not carry over to the case when $G$ is a nondisjoint union of graphs, we are still able to obtain a weaker result, which follows.

\begin{Proposition}
Suppose $r \in \mathbb{N}$ and $G = \bigcup_{i=1}^t G_i$, where $G_i$ is an induced subgraph of $G$ for each $i$. Then $\xir{r}(G) \leq \sum_{i=1}^t \xir{r}(G_i)$.
\end{Proposition}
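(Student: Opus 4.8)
The plan is to mimic the block-diagonal constructions of Lemma~\ref{xirsubadditive} and Proposition~\ref{xirdisjointmax}, with one extra twist to accommodate vertices lying in several of the $G_i$. First I would set $d_i = \xir{r}(G_i)$ and fix, for each $i \in \brackets{1}{t}$, a $(d_i;r)$-OSR $\{S_u^i\}_{u \in V(G_i)}$ for $G_i$ together with basis matrices $X_u^i \in \mathbb{C}^{d_i \times r}$. I would then work in $\mathbb{C}^d$ with $d = \sum_{i=1}^t d_i$, viewed as the orthogonal direct sum of $t$ coordinate blocks of sizes $d_1,\dots,d_t$. Since $V(G) = \bigcup_i V(G_i)$, for each $u \in V(G)$ one may choose an index $i(u)$ with $u \in V(G_{i(u)})$; define $X_u \in \mathbb{C}^{d \times r}$ to have all block rows zero except the $i(u)$-th, which equals $X_u^{i(u)}$, and set $S_u = \range(X_u)$. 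Because $X_u^{i(u)}$ has orthonormal columns, so does $X_u$, hence $\dim S_u = \rank X_u = r$ and $S_u$ is an $r$-dimensional subspace of $\mathbb{C}^d$.

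The remaining step is to check the orthogonality condition, and this is where the induced-subgraph hypothesis enters. Given an edge $uv \in E(G) = \bigcup_i E(G_i)$, pick $k$ with $uv \in E(G_k)$. If $i(u) \neq i(v)$, the unique nonzero block of $X_u$ and that of $X_v$ occupy different positions, so $X_u^* X_v = 0$ and $S_u \perp S_v$. If $i(u) = i(v) =: m$, then $u,v \in V(G_m)$, and since $G_m$ is an induced subgraph of $G$ with $uv \in E(G)$, necessarily $uv \in E(G_m)$; hence $S_u^m \perp S_v^m$, i.e.\ $(X_u^m)^* X_v^m = 0$, so again $X_u^* X_v = (X_u^m)^*(X_v^m) = 0$ and $S_u \perp S_v$. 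Then $\{S_u\}_{u \in V(G)}$ is a $(d;r)$-OSR for $G$, giving $\xir{r}(G) \leq d = \sum_{i=1}^t \xir{r}(G_i)$.

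The only real obstacle---and the reason the clean ``maximum'' identity of Proposition~\ref{xirdisjointmax} weakens to a sum here---is that a vertex shared among several $G_i$ receives a different subspace (in a different ambient space) from each, and these need not fit simultaneously into one small space; routing each vertex into a single designated block of the large direct sum avoids this. The hypothesis that each $G_i$ is induced gets used exactly once, to handle an edge whose two endpoints have been routed into the same block: one needs to know that such an edge of $G$ genuinely appears in that $G_i$, which is precisely the induced-subgraph property. Everything else is routine bookkeeping with orthonormal columns and block-structured matrices, as already carried out in the earlier proofs.
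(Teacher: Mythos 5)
Your proof is correct, and it differs from the paper's in one substantive way: the treatment of vertices shared among several $G_i$. The paper proves the case $t=2$ and recurses; for a vertex $u \in V(G_1) \cap V(G_2)$ it \emph{stacks} both basis matrices, setting $X_u = \left[ \begin{smallmatrix} X_u^1 \\ X_u^2 \end{smallmatrix} \right]$, so that the orthogonality check for an edge inside the intersection becomes $X_u^* X_v = (X_u^1)^* X_v^1 + (X_u^2)^* X_v^2 = 0$, which requires knowing $uv \in E(G_1) \cap E(G_2)$ (inducedness of both parts). You instead commit each vertex to a single designated block $i(u)$, which makes the cross-block case trivially orthogonal and invokes inducedness only once, for the same-block case. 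Your version handles general $t$ directly without recursion, and in fact never uses that every edge of $G$ lies in some $G_i$ --- only that the $V(G_i)$ cover $V(G)$ and each $G_i$ is induced --- so it is marginally more general as well as a bit cleaner; the paper's stacking has the aesthetic feature of treating the shared vertices symmetrically, but buys nothing extra here. One small cosmetic point: you introduce the index $k$ with $uv \in E(G_k)$ and then never use it; you could delete that sentence.
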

\begin{proof}
We prove the result for the case where $t = 2$ and note that recursive application of this case will prove the more general one.

For each $i \in \{1,2\}$, let $d_i = \xir{r}(G_i)$ and $\{S_u^i\}_{u \in V(G_i)}$ be a $(d_i;r)$-OSR for $G_i$, and for each $u \in V(G_i)$, let $X_u^i \in \mathbb{C}^{d_i \times r}$ be a basis matrix for $S_u^i$.

We partition $V(G) = V(G_1) \cup V(G_2)$ into three disjoint sets and consider vertices in each set. If $u \in V(G_1) \setminus V(G_2)$, let
\[ X_u = \left[ \begin{array}{c} X_u^1 \\ 0_{d_2 \times r} \end{array} \right]; \]
if $u \in V(G_2) \setminus V(G_1)$, let
\[ X_u = \left[ \begin{array}{c} 0_{d_1 \times r} \\ X_u^2 \end{array} \right]; \]
and if $u \in V(G_1) \cap V(G_2)$, let
\[ X_u = \left[ \begin{array}{c} X_u^1 \\ X_u^2 \end{array} \right]. \]
For each $u \in V(G)$, let $S_u = \range(X_u)$. Each $S_u$ is an $r$-dimensional subspace of $\mathbb{C}^{d_1+d_2}$.

We consider multiple cases to show that if $uv \in E(G)$, then $X_u^* X_v = 0$, so $S_u \perp S_v$. Throughout, we assume that $uv \in E(G)$.

First, suppose that $u \in V(G_1) \setminus V(G_2)$; then either $v \in V(G_1) \setminus V(G_2)$ or $v \in V(G_1) \cap V(G_2)$. In either case, $uv \in E(G_1)$ (since $G_1$ is an induced subgraph), and block multiplication yields $X_u^* X_v = (X_u^1)^* X_v^1$. Since $S_u^1 \perp S_v^1$, this quantity equals the zero matrix, so $S_u \perp S_v$. The case where $u \in V(G_2) \setminus V(G_1)$ is similar.

If $u, v \in V(G_1) \cap V(G_2)$, then $uv \in E(G_1) \cap E(G_2)$ since $G_1$ and $G_2$ are induced subgraphs. Then $X_u^* X_v = (X_u^1)^* X_v^1 + (X_u^2)^* X_v^2$. Since $S_u^1 \perp S_v^1$ and $S_u^2 \perp S_v^2$, this quantity is again the zero matrix, so $S_u \perp S_v$.

Therefore, $\{S_u\}_{u \in V(G)}$ is a $(d_1+d_2;r)$-OSR for $G$, so $\xir{r}(G) \leq d_1 + d_2 = \xir{r}(G_1) + \xir{r}(G_2)$. \qquad
\end{proof}

\begin{Lemma} \label{stdcliquerep}
Suppose that the complete graph $K_t$ is a subgraph of $G$ with $V(K_t) = \brackets{1}{t}$ and $G$ has a $(d;r)$ orthogonal subspace representation. Then $d \geq rt$ and $G$ has a $(d;r)$ orthogonal subspace representation in which the vertex $i \in V(K_t)$ is represented by
\[ \vspan \left\{ \bv{e}_{(i-1)r+1}, \ldots , \bv{e}_{(i-1)r+r-1},\bv{e}_{ir} \right\}. \]
\end{Lemma}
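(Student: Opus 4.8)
The plan is to start from an arbitrary $(d;r)$-OSR for $G$ and apply one unitary change of coordinates on $\mathbb{C}^d$ chosen so that the subspaces assigned to the clique vertices $1,\dots,t$ become the prescribed coordinate subspaces. Conjugation by a unitary preserves all inner products, hence preserves the relation $\perp$ between subspaces and the dimension of each subspace, so the transformed family is automatically a $(d;r)$-OSR for $G$ with the required form.

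First I would prove $d \ge rt$. Let $\{S_u\}_{u\in V(G)}$ be a $(d;r)$-OSR for $G$. Since $\brackets{1}{t}$ induces a copy of $K_t$, we have $S_i \perp S_j$ for all distinct $i,j\in\brackets{1}{t}$. Pairwise orthogonal subspaces intersect trivially and their span is their internal direct sum, so $\dim(S_1+\dots+S_t)=\sum_{i=1}^t\dim S_i = rt$; as this is a subspace of $\mathbb{C}^d$, we get $d\ge rt$. (This also follows from Observation \ref{xir-geq-omega} together with $\omega(G)\ge t$, since the existence of a $(d;r)$-OSR gives $d\ge\xir{r}(G)$.)

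Next I would build the unitary and transform. For each $i\in\brackets{1}{t}$ pick a basis matrix $X_i\in\mathbb{C}^{d\times r}$ for $S_i$. The block matrix $[\,X_1\ \cdots\ X_t\,]\in\mathbb{C}^{d\times rt}$ has orthonormal columns: columns inside a single block $X_i$ are orthonormal because $X_i$ is a basis matrix, and columns from distinct blocks $X_i,X_j$ are orthogonal because $S_i\perp S_j$. Extend these $rt$ orthonormal vectors to an orthonormal basis of $\mathbb{C}^d$ by appending a matrix $Y\in\mathbb{C}^{d\times(d-rt)}$ (empty if $d=rt$), and set $U=[\,X_1\ \cdots\ X_t\ Y\,]\in\mathbb{C}^{d\times d}$, a unitary matrix. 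For each $u\in V(G)$ put $S_u' := U^*S_u = \{U^*\bv{x}:\bv{x}\in S_u\}$. Since $U^*$ is an invertible linear isometry, each $S_u'$ is an $r$-dimensional subspace of $\mathbb{C}^d$, and $U^*X_u$ is a basis matrix for $S_u'$ whenever $X_u$ is one for $S_u$. If $uv\in E(G)$ then $S_u\perp S_v$, and for $\bv{u}\in S_u$, $\bv{v}\in S_v$ we have $(U^*\bv{u})^*(U^*\bv{v})=\bv{u}^*UU^*\bv{v}=\bv{u}^*\bv{v}=0$, so $S_u'\perp S_v'$; hence $\{S_u'\}_{u\in V(G)}$ is a $(d;r)$-OSR for $G$. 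Finally, for a clique vertex $i$ the matrix $U^*X_i$ is exactly the block of columns of $U^*U=I_d$ occupying positions $(i-1)r+1,\dots,ir$, so $S_i'=\range(U^*X_i)=\vspan\{\bv{e}_{(i-1)r+1},\dots,\bv{e}_{(i-1)r+r-1},\bv{e}_{ir}\}$, as stated.

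I do not expect a genuine obstacle here — the argument is essentially bookkeeping. The two points needing care are the dimension count for pairwise orthogonal subspaces (which uses that each $S_i$ is exactly $r$-dimensional and that the clique occupies the full index set $\brackets{1}{t}$) and the verification that $U^*X_i$ is precisely the intended block of standard basis vectors, i.e. that the ordering of blocks in $U$ is compatible with the indexing $(i-1)r+1,\dots,ir$ appearing in the statement.
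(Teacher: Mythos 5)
Your proof is correct and follows essentially the same route as the paper's: both obtain $d \ge rt$ from the pairwise orthogonality of the clique subspaces and then apply a single unitary sending the concatenated basis matrices $[X_1\ \cdots\ X_t]$ to the first $rt$ standard basis vectors, which preserves the OSR property. You merely make explicit the unitary (as $U^*$ with $U=[X_1\ \cdots\ X_t\ Y]$) that the paper invokes abstractly.
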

\begin{proof}
By Observation \ref{xir-geq-omega}, $d \geq r \cdot \omega(G) \geq rt$.

If $M \in \mathbb{C}^{d \times \ell}$ for some $\ell \leq d$ and the columns of $M$ are orthonormal, then by a change of orthonormal basis there exists a unitary matrix $U \in \mathbb{C}^{d \times d}$ such that $UM = [ \bv{e}_{1}, \ldots, \bv{e}_{\ell}]$.

Let $\{S_u\}_{u \in V(G)}$ be a $(d;r)$ orthogonal subspace representation for $G$ and for each $u \in V(G)$ let $X_u$ be a basis matrix for $S_u$. Define $M = [X_1, \ldots, X_t]$ and choose $U$ so that $UM=[ \bv{e}_1 , \ldots, \bv{e}_{tr}]$.  Define $S_u' = \range(UX_u)$. Then $\left\{ S_u' \right\}_{u\in V(G)}$ is a $(d;r)$ orthogonal subspace representation for $G$ with the desired property. \qquad
\end{proof}

\begin{Theorem}
If $G = G_1 \cs{t} G_2$ and $r \in \mathbb{N}$, then $\xir{r}(G) = \max \left\{ \xir{r}(G_1), \xir{r}(G_2) \right\}$.
\end{Theorem}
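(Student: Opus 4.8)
The plan is to prove the two inequalities separately. The bound $\xir{r}(G) \geq \max\{\xir{r}(G_1), \xir{r}(G_2)\}$ comes for free: since $G = G_1 \cs{t} G_2 = G_1 \cup G_2$, each $G_i$ is a subgraph of $G$, so Proposition \ref{xir-subgraph} gives $\xir{r}(G_i) \leq \xir{r}(G)$ for $i \in \{1,2\}$. For the reverse inequality, with $d = \max\{\xir{r}(G_1), \xir{r}(G_2)\}$, I would construct a single $(d;r)$ orthogonal subspace representation for $G$ by gluing together suitably normalized representations of $G_1$ and $G_2$ along the shared clique $K_t$.

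Concretely, write $d_i = \xir{r}(G_i)$. Since $K_t$ is a subgraph of each $G_i$, Observation \ref{xir-geq-omega} gives $d_i \geq r \cdot \omega(G_i) \geq rt$, so Lemma \ref{stdcliquerep} applies and yields, for each $i$, a $(d_i;r)$-OSR $\{S_u^i\}_{u \in V(G_i)}$ in which the clique vertex $j \in V(K_t)$ is represented by the fixed subspace $\vspan\{\bv{e}_{(j-1)r+1}, \ldots, \bv{e}_{jr}\}$ of $\mathbb{C}^{d_i}$. I would then zero-pad each basis matrix from $\mathbb{C}^{d_i \times r}$ to $\mathbb{C}^{d \times r}$, exactly as in the proof of Proposition \ref{xirdisjointmax}; this preserves orthonormality of the columns and every inner product $X_u^* X_v$, and sends $\bv{e}_k \in \mathbb{C}^{d_i}$ to $\bv{e}_k \in \mathbb{C}^d$, so both $\{S_u^1\}$ and $\{S_u^2\}$ become $(d;r)$-OSRs inside the same space $\mathbb{C}^d$ that still assign $\vspan\{\bv{e}_{(j-1)r+1}, \ldots, \bv{e}_{jr}\}$ to each clique vertex $j$. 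Because $V(G_1) \cap V(G_2) = V(K_t)$ and the two representations agree there, setting $S_u := S_u^1$ for $u \in V(G_1)$ and $S_u := S_u^2$ for $u \in V(G_2)$ gives a well-defined family of $r$-dimensional subspaces of $\mathbb{C}^d$. To finish I would verify the edge condition: if $uv \in E(G) = E(G_1) \cup E(G_2)$, say $uv \in E(G_1)$, then $u,v \in V(G_1)$, so $S_u = S_u^1$, $S_v = S_v^1$, and $S_u \perp S_v$ since $\{S_u^1\}$ is an OSR for $G_1$. This makes $\{S_u\}_{u \in V(G)}$ a $(d;r)$-OSR for $G$, giving $\xir{r}(G) \leq d$, and equality follows.

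The step I expect to need the most care — and the reason Lemma \ref{stdcliquerep} is proved first — is the gluing: arbitrary OSRs for $G_1$ and $G_2$ cannot simply be overlaid, since their subspaces for the shared clique vertices need not coincide. Forcing both clique representations into the canonical block form of Lemma \ref{stdcliquerep} is precisely what makes the overlay consistent. I would also want to record that a clique-sum has no edge joining $V(G_1) \setminus V(G_2)$ to $V(G_2) \setminus V(G_1)$ — such an edge would lie in neither $E(G_1)$ nor $E(G_2)$, hence not in $E(G)$ — so the reduction to the case $uv \in E(G_1)$ above really does exhaust all edges of $G$, and no orthogonality between an $S_u^1$ and an $S_v^2$ is ever required.
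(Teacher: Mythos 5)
Your proposal is correct and follows essentially the same route as the paper: invoke Lemma \ref{stdcliquerep} to put both representations into the canonical block form on the shared clique, embed into $\mathbb{C}^{d}$ with $d=\max\{d_1,d_2\}$ by zero-padding, and glue; the lower bound is Proposition \ref{xir-subgraph}. You are in fact slightly more explicit than the paper about the zero-padding step and about why every edge of $G$ lies in $E(G_1)\cup E(G_2)$, but the argument is the same.
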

\begin{proof}
Without loss of generality, let $d_1 = \xir{r}(G_1) \geq d_2 = \xir{r}(G_2)$ and $V(K_t) = \brackets{1}{t}$. Then by Lemma \ref{stdcliquerep}, for $i = 1,2$, each $G_i$ has a $(d_1; r)$ orthogonal subspace representation, $\{S_u^i\}_{u\in V(G)}$, in which vertex $v \leq t$ is represented by $S_v^i = \vspan \left\{ \bv{e}_{(v-1)r+1}, \ldots, \bv{e}_{(v-1)r+r-1}, \bv{e}_{vr} \right\}$.  Thus for $v \in \brackets{1}{t}$, $S_v^1 = S_v^2$; denote this common subspace by $S_v$.

For vertices $u\in V(G_i) \setminus \brackets{1}{t}$, define $S_u = S_u^i$ (observe that $u > t$ is in only one of $V(G_1)$ or $V(G_2)$). Then $\{S_u\}_{u\in V(G)}$ is a $(d_1;r)$ orthogonal subspace representation for $G$. \qquad
\end{proof}

\begin{Proposition} \label{prop-xir-equals-romega}
If $G$ is a graph with $\omega(G) = \chi(G)$, then $\xir{r}(G) = r \cdot \omega(G)$ for every $r \in \mathbb{N}$.
\end{Proposition}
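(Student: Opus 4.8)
The plan is to establish the two inequalities $\xir{r}(G) \geq r\cdot\omega(G)$ and $\xir{r}(G) \leq r\cdot\omega(G)$ separately. The lower bound is immediate from Observation \ref{xir-geq-omega}, which already gives $\xir{r}(G) \geq r\cdot\omega(G)$ for \emph{every} graph, so no hypothesis is needed there. All the work is in the upper bound, and this is where the hypothesis $\omega(G) = \chi(G)$ enters.

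For the upper bound, I would argue as follows. Set $c = \chi(G) = \omega(G)$ and fix a proper coloring $\varphi : V(G) \to \brackets{1}{c}$. For each color class $k \in \brackets{1}{c}$, take the $r$-dimensional coordinate subspace $T_k = \vspan\{\bv{e}_{(k-1)r+1}, \ldots, \bv{e}_{kr}\} \subseteq \mathbb{C}^{cr}$. These $c$ subspaces are pairwise orthogonal since they involve disjoint blocks of standard basis vectors. Now assign to each vertex $u$ the subspace $S_u = T_{\varphi(u)}$. If $uv \in E(G)$ then $\varphi(u) \neq \varphi(v)$ (properness of the coloring), so $S_u = T_{\varphi(u)} \perp T_{\varphi(v)} = S_v$. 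Each $S_u$ is an $r$-dimensional subspace of $\mathbb{C}^{cr}$, so $\{S_u\}_{u \in V(G)}$ is a $(cr; r)$-OSR for $G$, giving $\xir{r}(G) \leq cr = r\cdot\chi(G) = r\cdot\omega(G)$. Combining with the lower bound yields equality.

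There is essentially no obstacle here — the construction is a direct ``fractionalization'' of the obvious fact that $\xi(G) \leq \chi(G)$ via a coloring, and the hypothesis $\omega(G) = \chi(G)$ is used purely to make the clique lower bound $r\cdot\omega(G)$ meet the coloring upper bound $r\cdot\chi(G)$. The one point worth stating carefully is that the subspaces $T_k$ genuinely are pairwise orthogonal, which is clear because a standard basis vector $\bv{e}_i$ and $\bv{e}_j$ are orthogonal whenever $i \neq j$, and the index blocks $\{(k-1)r+1, \ldots, kr\}$ for distinct $k$ are disjoint. I might also remark that this proposition recovers the $r=1$ statement $\xi(G) = \omega(G)$ for such graphs, and applies in particular to perfect graphs (hence to chordal graphs and complete graphs), tying it to the clique-sum result just proved.
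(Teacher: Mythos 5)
Your proof is correct and follows the same overall squeeze $r\cdot\omega(G) \leq \xir{r}(G) \leq r\cdot\chi(G) = r\cdot\omega(G)$ as the paper; the only difference is in how the upper bound is obtained. The paper chains two previously established facts --- the corollary $\xir{r}(G) \leq r\cdot\xi(G)$ (from subadditivity of $\xir{r}$) together with the well-known inequality $\xi(G) \leq \chi(G)$ --- whereas you prove $\xir{r}(G) \leq r\cdot\chi(G)$ directly by assigning to each color class an $r$-dimensional coordinate block of $\mathbb{C}^{cr}$. Your construction is self-contained and makes explicit the ``fractionalized coloring'' intuition, at the cost of redoing work the paper has already packaged into its corollary; the paper's version is shorter given its preceding results. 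Either way the argument is sound, and your closing remarks about perfect and chordal graphs match the paper's own comment following the proposition.
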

\begin{proof}
It is well-known that $\xi(G) \leq \chi(G)$ (see, e.g., \cite{Robersonthesis}). Therefore,
\[ r \cdot \omega(G) \leq \xir{r}(G) \leq r \cdot \xi(G) \leq r \cdot \chi(G) = r \cdot \omega(G) \]
and thus equality holds throughout.
\end{proof}

We note that perfect graphs and chordal graphs are among those that satisfy $\omega(G) = \chi(G)$, and so Proposition \ref{prop-xir-equals-romega} applies to these classes.

\begin{Remark} {\rm
Since $\xir{1}(G) = \xi(G)$ for every graph $G$, the previous properties of $r$-fold orthogonal rank also apply to orthogonal rank, where appropriate.
}
\end{Remark}

\subsection{Projective rank as fractional orthogonal rank}

It is easy to see that $(d;r)$ orthogonal subspace representations are closely related to $d/r$-\break representations; in fact, they are in one-to-one correspondence.

\begin{Proposition} \label{drOSRiffdr}
A graph $G$ has a $(d;r)$ orthogonal subspace representation if and only if $G$ has a $d/r$-representation.
\end{Proposition}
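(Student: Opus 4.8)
The plan is to establish a direct correspondence between the two structures by associating to each $r$-dimensional subspace the orthogonal projector onto it. Recall that for an $r$-dimensional subspace $S \subseteq \mathbb{C}^d$ with basis matrix $X \in \mathbb{C}^{d \times r}$, the matrix $P = XX^*$ is the unique Hermitian idempotent of rank $r$ whose range is $S$; conversely, any rank-$r$ Hermitian idempotent $P \in \mathbb{C}^{d \times d}$ factors as $P = XX^*$ for some $X$ with orthonormal columns, and $\range(P)$ is an $r$-dimensional subspace. This dictionary is the backbone of the argument, and essentially all that remains is to check that it carries the orthogonality condition on subspaces to the condition $P_uP_v = 0$ on adjacent vertices.

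For the forward direction, suppose $\{S_u\}_{u \in V(G)}$ is a $(d;r)$-OSR for $G$. For each $u$, fix a basis matrix $X_u$ and set $P_u = X_uX_u^*$. Then each $P_u$ is a $(d \times d)$ Hermitian projector of rank $r$. If $uv \in E(G)$, then $S_u \perp S_v$, which by the equivalent condition stated in the background section means $X_u^* X_v = 0$; hence $P_uP_v = X_uX_u^*X_vX_v^* = X_u(X_u^*X_v)X_v^* = 0$. So $\{P_u\}$ is a $d/r$-representation. For the reverse direction, suppose $\{P_u\}$ is a $d/r$-representation. Set $S_u = \range(P_u)$; since $P_u$ is a rank-$r$ projector, $\dim S_u = r$. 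Writing $P_u = X_uX_u^*$ with $X_u$ a basis matrix for $S_u$, the hypothesis $P_uP_v = 0$ for $uv \in E(G)$ gives $X_uX_u^*X_vX_v^* = 0$; multiplying on the left by $X_u^*$ and on the right by $X_v$ and using $X_u^*X_u = I_r = X_v^*X_v$ yields $X_u^*X_v = 0$, so $S_u \perp S_v$. Thus $\{S_u\}$ is a $(d;r)$-OSR.

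There is no real obstacle here — the proposition is essentially a restatement of the elementary correspondence between projectors and their ranges. The only point requiring a little care is the cancellation step in the reverse direction: from $P_uP_v = 0$ one must recover $X_u^*X_v = 0$ rather than merely $X_uX_u^*X_vX_v^* = 0$, and this is exactly where the orthonormality of the columns of the basis matrices is used. It may be worth remarking explicitly that the constructions in the two directions are mutually inverse (up to the choice of basis matrix, which does not affect either structure), so the correspondence is genuinely one-to-one as claimed in the surrounding text, though strictly the "if and only if" statement of the proposition only requires the two implications above.
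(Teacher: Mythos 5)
Your proof is correct and follows essentially the same route as the paper's: assign $P_u = X_uX_u^*$ in one direction and $S_u = \range(P_u)$ in the other, with the orthogonality conditions translated via $X_u^*X_v = 0 \iff P_uP_v = 0$. Your explicit cancellation step (multiplying by $X_u^*$ and $X_v$ and using orthonormality) just spells out the backward implication that the paper states as part of a chain of equivalences.
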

\begin{proof}
Suppose that $G$ has a $(d;r)$ orthogonal subspace representation $\{S_u\}_{u \in V(G)}$, so each $S_u$ is an $r$-dimensional subspace of $\mathbb{C}^d$. For each $u \in V(G)$, define $P_u =X_u X_u^*$, where $X_u \in \mathbb{C}^{d \times r}$ is a basis matrix for $S_u$. It is then easy to verify that $P_u \in \mathbb{C}^{d \times d}$, $\rank P_u  = \rank X_u  = r$, $P_u^* = P_u$, and $P_u^2 = P_u$.

Let $uv \in E(G)$, so $S_u \perp S_v$. We see that
\[ S_u \perp S_v \iff X_u^* X_v = 0 \iff X_u X_u^* X_v X_v^* = 0 \iff P_u P_v = 0. \]
Thus if $uv \in E(G)$, then $P_u P_v = 0$. We conclude that $\{P_u\}_{u \in V(G)}$ is a $d/r$-representation for $G$.

For the converse, suppose that $\{P_u\}_{u \in V(G)}$ is a $d/r$-representation for $G$. For each $u \in V(G)$, let $P_u = X_u I_r X_u^*$ be a reduced singular value decomposition of the projector $P_u$ (where $X_u \in \mathbb{C}^{d \times r}$) and define $S_u = \range(P_u) = \range(X_u)$. Clearly $S_u$ is an $r$-dimensional subspace of $\mathbb{C}^d$. If $uv \in E(G)$, then $P_u P_v = 0$, so by the above chain of equivalences $S_u \perp S_v$. Therefore, $\{S_u\}_{u \in V(G)}$ is a $(d;r)$ orthogonal subspace representation for $G$. \qquad
\end{proof}

With this in mind, we obtain the following ``fractional" definition of projective rank.

\begin{Theorem} \label{xifandxir} For every graph $G$,
\[ \xi_f(G) = \inf_r \left\{ \frac{\xir{r}(G)}{r} \right\}. \]
\end{Theorem}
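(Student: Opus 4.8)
The plan is to prove the equality $\xi_f(G) = \inf_r \{\xir{r}(G)/r\}$ by showing the two infima range over essentially the same set of ratios $d/r$. By Proposition \ref{drOSRiffdr}, for fixed $d$ and $r$ with $d \geq r$, the graph $G$ has a $(d;r)$-OSR if and only if $G$ has a $d/r$-representation. Hence the set of pairs $(d,r)$ admitting a $d/r$-representation is exactly the set of pairs for which $G$ has a $(d;r)$-OSR. The one subtlety is that the definition of $\xi_f(G)$ takes the infimum of $d/r$ over \emph{all} valid pairs $(d,r)$, whereas $\inf_r \{\xir{r}(G)/r\}$ first minimizes $d$ for each fixed $r$ (obtaining $\xir{r}(G)$) and then takes the infimum over $r$. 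So the main work is to argue that this two-stage optimization produces the same infimum.

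First I would establish the inequality $\inf_r \{\xir{r}(G)/r\} \geq \xi_f(G)$. For each $r$, let $d = \xir{r}(G)$; then $G$ has a $(d;r)$-OSR, so by Proposition \ref{drOSRiffdr} it has a $d/r$-representation, whence $\xi_f(G) \leq d/r = \xir{r}(G)/r$. Taking the infimum over $r$ gives the claim. This direction is immediate.

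For the reverse inequality $\xi_f(G) \geq \inf_r \{\xir{r}(G)/r\}$, I would take an arbitrary pair $(d,r)$ for which $G$ has a $d/r$-representation. By Proposition \ref{drOSRiffdr}, $G$ then has a $(d;r)$-OSR, so by definition of $r$-fold orthogonal rank we have $\xir{r}(G) \leq d$, and therefore $\xir{r}(G)/r \leq d/r$. Consequently $\inf_r \{\xir{r}(G)/r\} \leq d/r$ for every such pair $(d,r)$; taking the infimum over all such pairs yields $\inf_r \{\xir{r}(G)/r\} \leq \xi_f(G)$. Combining the two inequalities gives equality.

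The ``hard part'' here is not really hard: it is purely a matter of carefully unwinding the two infimum definitions and invoking the correspondence from Proposition \ref{drOSRiffdr}. The only thing to be careful about is the bookkeeping on which variable is being minimized at which stage, and the observation that $\xir{r}(G)$ being a minimum (it is attained, since $G$ always has a $(|V(G)|\cdot r; r)$-OSR by a block-diagonal construction, so the set of valid $d$ is a nonempty set of natural numbers) means $\xir{r}(G)/r$ is a legitimate element of the set whose infimum defines $\xi_f(G)$. No genuine obstacle arises; the subadditivity from Lemma \ref{xirsubadditive} is not needed for the bare equality, though it would be relevant if one wanted to replace the infimum by a limit.
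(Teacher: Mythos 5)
Your proposal is correct and follows essentially the same route as the paper: the paper's proof is a chain of equalities that unwinds $\xir{r}(G)$ as a minimum over $d$, collapses the nested infimum into a joint infimum over $(d,r)$, and invokes Proposition \ref{drOSRiffdr} to pass between $(d;r)$-OSRs and $d/r$-representations. Your two-inequality phrasing is just a more explicit rendering of the same argument, and your remarks about attainment of the minimum and the non-necessity of subadditivity are accurate.
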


\begin{proof}
\begin{align*}
\inf_r \left\{ \frac{\xir{r}(G)}{r} \right\}
	&=	\inf_r \left\{ \frac{ \min\{ d : G \text{ has a $(d;r)$-OSR} \}}{r}  \right\} \\
	&=	\inf_r \left\{ \min_d \left\{ \frac{d}{r} : G \text{ has a $(d;r)$-OSR} \right\} \right\} \\
	&=	\inf_{d,r} \left\{ \frac{d}{r} : G \text{ has a $(d;r)$-OSR} \right\} \\
	&=	\inf_{d,r} \left\{ \frac{d}{r} : G \text{ has a $d/r$-representation} \right\} \\
	&=	\xi_f(G). \qedhere
\end{align*}
\end{proof}

Given that this expression of $\xi_f(G)$ is similar to that of $\chi_f(G)$ given in \cite{fgt}, it is not unreasonable to hope that this could shed some light on the question of the rationality of $\xi_f(G)$ for all graphs.\footnote{Recall that $\chi_f(G)$ is rational for any graph $G$.} Unfortunately, finding a $b$-fold coloring with $c$ colors for $G$ is ultimately a far different problem from finding a $(d;r)$ orthogonal subspace representation for $G$. In the $b$-fold coloring problem, we have a restriction on the number of available colors, which adds a certain finiteness to the problem: each vertex is assigned a subset of the available $c < \infty$ colors. In contrast, restricting the subspaces to lie in $\mathbb{C}^d$ in the orthogonal subspace representation problem does not impose this same type of finiteness: each vertex is assigned a finite dimensional subspace of $\mathbb{C}^d$, and $d < \infty$, but there are infinitely many subspaces that can be assigned to each vertex.

We provide one additional equivalent definition of projective rank, for which we need the following utility result from \cite{fgt}, also commonly known as Fekete's Lemma.

\begin{Lemma}[\cite{fgt}, Lemma A.4.1] \label{fgtA41}
Suppose $g : \mathbb{N} \to \mathbb{R}$ is subadditive and $g(n) \geq 0$ for all $n$. Then the limit
\[ \lim_{n \to \infty} \frac{g(n)}{n} \]
exists and is equal to the infimum of $g(n) / n$ $(n \in \mathbb{N})$.
\end{Lemma}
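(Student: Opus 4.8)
The plan is to prove this by the standard division-with-remainder argument underlying Fekete's Lemma. First I would set $L = \inf\{ g(n)/n : n \in \mathbb{N} \}$; because $g(n) \geq 0$ for every $n$, this infimum is a well-defined real number with $0 \le L \le g(1)$. Since $g(n)/n \geq L$ holds for every $n$ by definition of the infimum, it suffices to show $\limsup_{n \to \infty} g(n)/n \le L$: this simultaneously establishes that the limit exists and that it equals $L = \inf_n g(n)/n$.

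Next I would fix $\eps > 0$ and choose $m \in \mathbb{N}$ with $g(m)/m < L + \eps$. For an arbitrary $n \geq m$, write $n = qm + s$ with $q \geq 1$ and $s \in \brackets{0}{m-1}$. Induction on $q$ together with subadditivity gives $g(qm) \le q\,g(m)$, and one further application (when $s \geq 1$) yields $g(n) \le g(qm) + g(s) \le q\,g(m) + g(s)$, while when $s = 0$ the estimate $g(n) = g(qm) \le q\,g(m)$ holds directly. Setting $C = \max\{ g(s) : 1 \le s \le m-1 \}$ for $m \geq 2$ and $C = 0$ for $m = 1$ (so $C \geq 0$ in either case), we obtain $g(n) \le q\,g(m) + C$ in every case. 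Dividing by $n$ and using both $n \geq qm$ and $g(m) \geq 0$,
\[
L \;\le\; \frac{g(n)}{n} \;\le\; \frac{q\,g(m)}{n} + \frac{C}{n} \;\le\; \frac{q\,g(m)}{qm} + \frac{C}{n} \;=\; \frac{g(m)}{m} + \frac{C}{n} \;<\; L + \eps + \frac{C}{n}.
\]
Choosing $N \geq m$ with $C/N < \eps$, we then have $L \le g(n)/n < L + 2\eps$ for all $n \geq N$; since $\eps > 0$ was arbitrary, $g(n)/n \to L$, completing the proof.

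The argument is genuinely routine; the only place requiring care --- and thus the main obstacle, such as it is --- is the bookkeeping of the remainder term: one must bound $g(s)$ uniformly over the \emph{finite} set of possible remainders $s$ (hence the constant $C$), correctly handle the degenerate cases $s = 0$, $m = 1$, and $n < m$, and notice exactly where the hypothesis $g \geq 0$ is used, namely to guarantee that $L$ is finite and to pass from $q\,g(m)/n$ to $g(m)/m$ via $n \geq qm$.
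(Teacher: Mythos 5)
Your argument is correct: this is the standard division-with-remainder proof of Fekete's Lemma, and the bookkeeping of the remainder term (the uniform constant $C$, the cases $s=0$ and $m=1$, and the use of $g \geq 0$ to pass from $q\,g(m)/n$ to $g(m)/m$) is all handled properly. The paper itself gives no proof, citing the result from \cite{fgt}, so there is nothing to compare against; your proof is the expected one.
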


Since $\xir{r}$ is subadditive, this yields the following corollary to the previous theorem.

\begin{Corollary} \label{xirinfequalslim} For every graph $G$,
\[ \xi_f(G) = \inf_r \left\{ \frac{\xir{r}(G)}{r} \right\} = \lim_{r \to \infty} \frac{\xir{r}(G)}{r} \, , \]
and this limit exists.
\end{Corollary}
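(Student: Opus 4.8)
The plan is to combine Theorem~\ref{xifandxir} with Lemma~\ref{fgtA41} (Fekete's Lemma), so the corollary is essentially a two-line deduction. First I would note that Theorem~\ref{xifandxir} already gives $\xi_f(G) = \inf_r \xir{r}(G)/r$, so the only new content is the equality with the limit and the claim that the limit exists. Second, to invoke Lemma~\ref{fgtA41} I need to verify its two hypotheses for the function $g(r) = \xir{r}(G)$: subadditivity and nonnegativity. Subadditivity is exactly Lemma~\ref{xirsubadditive}, which states $\xir{r+s}(G) \leq \xir{r}(G) + \xir{s}(G)$ for all $r,s \in \mathbb{N}$. Nonnegativity is immediate since $\xir{r}(G)$ is a minimum of a set of natural numbers (and in fact $\xir{r}(G) \geq r\,\omega(G) \geq 0$ by Observation~\ref{xir-geq-omega}); one should perhaps remark that the set $\{d : G \text{ has a }(d;r)\text{-OSR}\}$ is nonempty, e.g.\ because taking $d = r|V(G)|$ and assigning disjoint $r$-dimensional coordinate subspaces to distinct vertices always works, so the minimum defining $\xir{r}(G)$ exists and is a positive integer.

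With both hypotheses checked, Lemma~\ref{fgtA41} gives that $\lim_{r\to\infty} \xir{r}(G)/r$ exists and equals $\inf_r \xir{r}(G)/r$. Chaining this with Theorem~\ref{xifandxir} yields
\[ \xi_f(G) = \inf_r \frac{\xir{r}(G)}{r} = \lim_{r\to\infty} \frac{\xir{r}(G)}{r}, \]
which is precisely the statement of the corollary, including the assertion that the limit exists.

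There is no real obstacle here: the corollary is a direct packaging of previously established facts. The only thing to be careful about is making sure the hypotheses of Fekete's Lemma are cited correctly — in particular that $g$ is defined on all of $\mathbb{N}$ (it is, since a $(d;r)$-OSR exists for every $r$) and that subadditivity as proved in Lemma~\ref{xirsubadditive} matches the form required by Lemma~\ref{fgtA41}. So the write-up is just: state that $g(r) := \xir{r}(G)$ is subadditive by Lemma~\ref{xirsubadditive} and nonnegative, apply Lemma~\ref{fgtA41}, and combine with Theorem~\ref{xifandxir}.
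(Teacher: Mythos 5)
Your proposal is correct and matches the paper's argument exactly: the corollary is obtained by applying Lemma~\ref{fgtA41} to the subadditive, nonnegative function $r \mapsto \xir{r}(G)$ and combining with Theorem~\ref{xifandxir}. The extra remark that $\xir{r}(G)$ is well-defined for every $r$ is a harmless and reasonable addition.
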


With this result, we see that many of the properties of $\xir{r}(G)$ also apply to $\xi_f(G)$.

\begin{Theorem} \label{xifproperties}
For every graph $G$:
\begin{enumerate}[i)]
\item {\rm \cite{Robersonthesis, MR12} } $\xi_f(G) \geq \omega(G)$.
\item If $H$ is a subgraph of $G$, then $\xi_f(H) \leq \xi_f(G)$.
\item If $G = \bigdcup_{i=1}^t G_i$ for some graphs $\{ G_i \}_{i=1}^t$, then $\xi_f(G) = \max_i \left\{ \xi_f(G_i) \right\}$.
\item If $G = \bigcup_{i=1}^t G_i$ for some induced subgraphs $\{ G_i \}_{i=1}^t$, then $\xi_f(G) \leq \sum_{i=1}^t \xi_f(G_i)$.
\item If $G = G_1 \cs{t} G_2$, then $\xi_f(G) = \max \left\{ \xi_f(G_1), \xi_f(G_2) \right\}$.
\item If $G$ satisfies $\omega(G) = \xi(G)$, then $\xi_f(G) = \omega(G)$.
\end{enumerate}
\end{Theorem}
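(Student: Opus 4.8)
The plan is to obtain all six items by ``passing to the limit'' in the already-established statements about $\xir{r}$, using Corollary \ref{xirinfequalslim}, which guarantees both that $\xi_f(G) = \lim_{r\to\infty}\xir{r}(G)/r$ and that this limit exists. The pairing of each item with its $\xir{r}$-analogue is: item (i) with Observation \ref{xir-geq-omega}; item (ii) with Proposition \ref{xir-subgraph}; item (iii) with Proposition \ref{xirdisjointmax}; item (iv) with the proposition above giving $\xir{r}(G)\le\sum_i\xir{r}(G_i)$ for unions of induced subgraphs; item (v) with the clique-sum theorem above; and item (vi) with item (i) together with the inequality $\xi_f(G)\le\xi(G)$.

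For items (i), (ii), (iv), (vi) this is essentially mechanical. For (ii): $\xir{r}(H)\le\xir{r}(G)$ for every $r$, so $\xir{r}(H)/r\le\xir{r}(G)/r$, and letting $r\to\infty$ gives $\xi_f(H)\le\xi_f(G)$. Item (iv) is the same, dividing $\xir{r}(G)\le\sum_{i=1}^t\xir{r}(G_i)$ by $r$ and using that a finite sum of convergent sequences converges to the sum of the limits. For (i): $\xir{r}(G)\ge r\cdot\omega(G)$ gives $\xir{r}(G)/r\ge\omega(G)$ for all $r$, hence $\xi_f(G)\ge\omega(G)$. For (vi): from (i) we have $\xi_f(G)\ge\omega(G)$, while $\xi_f(G)=\inf_r\xir{r}(G)/r\le\xir{1}(G)/1=\xi(G)$, so the hypothesis $\omega(G)=\xi(G)$ forces equality throughout, i.e.\ $\xi_f(G)=\omega(G)$.

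The only point needing a moment's care --- and the closest thing to an obstacle --- is items (iii) and (v), where $\max$ must be commuted with the limit. From Proposition \ref{xirdisjointmax}, $\xir{r}(G)/r=\max_i\{\xir{r}(G_i)/r\}$ for every $r$; since the maximum is over the fixed finite set $\{1,\dots,t\}$ and each sequence $\xir{r}(G_i)/r$ converges to $\xi_f(G_i)$ by Corollary \ref{xirinfequalslim}, the left side converges to $\max_i\{\xi_f(G_i)\}$, which is (iii). Item (v) is the identical argument applied to the clique-sum theorem with $t=2$. (The ``$\ge$'' direction in both is in any case immediate from (ii), since each $G_i$ is a subgraph of $G$; only the ``$\le$'' direction uses the limit. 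One could avoid the limit entirely here by working from the infimum form and invoking subadditivity of $\xir{\cdot}$ along a common multiple of near-optimal values of $r$ for the several $G_i$, but the limit computation is cleaner.) No step presents a genuine difficulty: the substance of the theorem is carried entirely by the $\xir{r}$-analogues together with Fekete's lemma, and the proof is a short enumeration of the six limit passages.
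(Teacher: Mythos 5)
Your proposal is correct and follows exactly the paper's strategy: each item is obtained by dividing the corresponding $r$-fold statement by $r$ and passing to the limit via Corollary \ref{xirinfequalslim}, with the paper writing out only item (ii) and asserting the rest are analogous. Your extra care on interchanging $\max$ with the limit in items (iii) and (v) is a welcome elaboration of a step the paper leaves implicit, but it does not constitute a different approach.
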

\begin{proof}
Consider the second claim. By Proposition \ref{xir-subgraph}, for any $r \in \mathbb{N}$, $\xir{r}(H) \leq \xir{r}(G)$, so $\frac{\xir{r}(H)}{r} \leq \frac{\xir{r}(G)}{r}$. Taking the limit as $r$ approaches $\infty$ and applying Corollary \ref{xirinfequalslim}, we have $\xi_f(H) \leq \xi_f(G)$.

The remaining claims follow by applying similar arguments to the corresponding $r$-fold results.
\end{proof}


\section{Fractional minimum positive semidefinite rank} \label{sec-frac-PSD-mr}

In this section, we introduce $(d;r)$ faithful orthogonal subspace representations, $r$-fold minimum positive semidefinite rank, and fractional minimum positive semidefinite rank, extending the definitions of faithful orthogonal representations and minimum positive semidefinite rank. We then introduce faithful $d/r$-projective representations and connect everything to projective rank. A connection to positive semidefinite matrices is explored, and properties of our new quantities are proven.

Unless otherwise specified, all matrices and vectors in this section are assumed to be complex-valued (the literature on minimum positive semidefinite rank is mixed, with both real and complex cases studied).

\subsection{Faithful orthogonal subspace representations and fractional minimum positive semidefinite rank}

Given a graph $G$ and $d, r \in \mathbb{N}$ with $r \leq d$, a \emph{$(d;r)$ faithful orthogonal subspace representation}, or $(d;r)$-FOSR, for $G$ is a set of subspaces $\{S_u\}_{u \in V(G)}$ where 
\begin{itemize}
\item for each $u \in V(G)$, $S_u$ is an $r$-dimensional subspace of $\mathbb{C}^d$; and
\item $S_u \perp S_v$ if and only if $uv \notin E(G)$.
\end{itemize}
A faithful orthogonal representation (as defined in Section \ref{sec-intro-defs}) generates a $(d;1)$ faithful orthogonal subspace representation, and vice versa. Further, a $(d;r)$-FOSR for a graph $G$ is a $(d;r)$-OSR for its complement $\overline{G}$, but the reverse statement is not true in general.

Now that we have defined an $r$-fold analogue of a faithful orthogonal representation, it is natural to consider a corresponding version of $\mr^+(G)$. The \emph{$r$-fold minimum positive semidefinite rank} of $G$ is
\[ \mrr{r}^+(G) = \min \{ d : G \text{ has a $(d;r)$ faithful orthogonal subspace representation} \}. \]
In particular, we have $\mrr{1}^+(G) = \mr^+(G)$, using definition \eqref{def-mrplus} of $\mr^+$; we caution the reader that this coincides with the definitions of faithful orthogonal representation and minimum positive semidefinite rank in the literature (e.g. \cite{BHH11, HLA2ch46}) if and only if $G$ has no isolated vertices.

We note that $\mrr{r}^+(G)$ is subadditive. The proof is analogous to the proof of Lemma \ref{xirsubadditive} and is omitted, as are the proofs for other results in this section that parallel those for the non-faithful case (i.e., the $\xi$-family of parameters).

\begin{Lemma} \label{mrrsubadditive}
$\mrr{r}^+$ is a subadditive function of $r$, i.e., for every graph $G$ and all $r, s \in \mathbb{N}$, 
\[ \mrr{r+s}^+(G) \leq \mrr{r}^+(G) + \mrr{s}^+(G). \]
\end{Lemma}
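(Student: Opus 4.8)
The plan is to mirror exactly the construction used in the proof of Lemma~\ref{xirsubadditive}, since the only difference between an OSR and an FOSR is that the orthogonality condition becomes an ``if and only if.'' So the forward direction of the argument (adjacent vertices get orthogonal subspaces) will be free once we know the construction produces orthogonality from orthogonality, and the real work is checking the converse: that the block construction does \emph{not} accidentally create orthogonality between subspaces of non-adjacent vertices.

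First I would set $d_r = \mrr{r}^+(G)$ and $d_s = \mrr{s}^+(G)$, and take a $(d_r;r)$-FOSR $\{S_u^r\}$ and a $(d_s;s)$-FOSR $\{S_u^s\}$ for $G$. For each $u$, pick basis matrices $X_u^r$ and $X_u^s$, form the block-diagonal matrix $X_u = \left[\begin{smallmatrix} X_u^r & 0 \\ 0 & X_u^s \end{smallmatrix}\right] \in \mathbb{C}^{(d_r+d_s)\times(r+s)}$, and set $S_u = \range(X_u)$. As in Lemma~\ref{xirsubadditive}, each $S_u$ has dimension $r+s$, and $X_u^*X_v$ is block-diagonal with blocks $(X_u^r)^*(X_v^r)$ and $(X_u^s)^*(X_v^s)$.

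Now the key observation: $X_u^*X_v = 0$ if and only if \emph{both} $(X_u^r)^*(X_v^r) = 0$ and $(X_u^s)^*(X_v^s) = 0$, i.e.\ if and only if $S_u^r \perp S_v^r$ \emph{and} $S_u^s \perp S_v^s$. Since $\{S_u^r\}$ is an FOSR, $S_u^r \perp S_v^r \iff uv \notin E(G)$, and likewise for the $s$-fold representation. Hence $S_u \perp S_v \iff (uv \notin E(G)) \text{ and } (uv \notin E(G)) \iff uv \notin E(G)$. This shows $\{S_u\}_{u \in V(G)}$ is a $(d_r+d_s; r+s)$-FOSR for $G$, so $\mrr{r+s}^+(G) \leq d_r + d_s = \mrr{r}^+(G) + \mrr{s}^+(G)$.

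I do not anticipate a genuine obstacle here — the structure is identical to Lemma~\ref{xirsubadditive}. The one point requiring a moment's care (and the reason the faithful case is not completely automatic) is that the block-diagonal construction needs the ``only if'' direction to pass through cleanly, which it does precisely because a block-diagonal matrix vanishes exactly when all its diagonal blocks vanish; had we used a construction that combined the two representations additively (stacking rather than block-diagonalizing), we could have lost faithfulness through cancellation. So the proof write-up should explicitly invoke the biconditional in the block-product computation rather than just the forward implication used in the OSR case.
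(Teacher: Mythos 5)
Your proof is correct and is precisely the argument the paper intends: the paper omits the proof of Lemma~\ref{mrrsubadditive}, stating only that it is analogous to that of Lemma~\ref{xirsubadditive}, and your block-diagonal construction with the explicit check that $X_u^*X_v=0$ holds if and only if both diagonal blocks vanish is exactly that analogue. Your remark about why the biconditional passes through cleanly (and why an additive/stacking construction could fail) correctly identifies the only point of care in the faithful case.
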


As in the non-faithful case, an immediate corollary relates $\mrr{r}^+$ to $\mr^+$.

\begin{Corollary} \label{mrrleqrtimesmr}
For every graph $G$ and all $r \in \mathbb{N}$,
\[ \frac{\mrr{r}^+(G)}{r} \leq \mr^+(G). \]
\end{Corollary}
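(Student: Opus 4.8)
The plan is to mimic the proof of the analogous Corollary in the non-faithful case, using the subadditivity of $\mrr{r}^+$ (Lemma \ref{mrrsubadditive}) together with the base case $\mrr{1}^+(G) = \mr^+(G)$. The key observation is that subadditivity lets us bound $\mrr{r}^+(G)$ by a telescoping sum of $r$ copies of $\mrr{1}^+(G) = \mr^+(G)$.

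Concretely, I would argue by induction on $r$ (or simply write out the chain directly). For $r = 1$ the inequality $\frac{\mrr{1}^+(G)}{1} \leq \mr^+(G)$ is an equality and there is nothing to prove. For $r \geq 2$, apply Lemma \ref{mrrsubadditive} with $s = 1$ to get
\[ \mrr{r}^+(G) \leq \mrr{r-1}^+(G) + \mrr{1}^+(G) = \mrr{r-1}^+(G) + \mr^+(G), \]
and iterate (or invoke the inductive hypothesis) to obtain $\mrr{r}^+(G) \leq r \cdot \mr^+(G)$. Dividing through by $r$ gives the claimed bound $\frac{\mrr{r}^+(G)}{r} \leq \mr^+(G)$.

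I expect no real obstacle here: the statement is a routine corollary, and the only substantive ingredient — subadditivity of $\mrr{r}^+$ — has already been recorded as Lemma \ref{mrrsubadditive} (with its proof deferred as analogous to Lemma \ref{xirsubadditive}). The one point worth a moment's care is the implicit assumption that $\mr^+(G)$ and each $\mrr{r}^+(G)$ are finite, so that the arithmetic is legitimate; this holds because $G$ always has a $(d;r)$-FOSR for $d = r \cdot |V(G)|$ (assign each vertex a generic $r$-dimensional subspace, or take $r$-fold ``blow-ups'' of a faithful orthogonal representation in $\mathbb{C}^{|V(G)|}$), so the relevant minima are over nonempty sets. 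Thus the proof is essentially the same two-line telescoping argument used for the corollary following Lemma \ref{xirsubadditive}.
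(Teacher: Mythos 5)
Your proposal is correct and follows exactly the paper's (implicit) argument: the paper treats this as an immediate corollary of Lemma \ref{mrrsubadditive}, telescoping $\mrr{r}^+(G) \leq \mrr{r-1}^+(G) + \mrr{1}^+(G)$ down to $r \cdot \mr^+(G)$, just as in the non-faithful case. Your added remark on finiteness is a harmless (and valid) extra precaution.
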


For any graph $G$, we define the \emph{fractional minimum positive semidefinite rank} of $G$ as
\[ \mr^+_f(G) = \inf_r \left\{ \frac{\mrr{r}^+(G)}{r} \right\}. \]
Notice that if $G$ has a $(d;r)$ faithful orthogonal subspace representation, then $\mrr{r}^+(G) \leq d$, so $\mr_f^+(G) \leq \frac{d}{r}$.

\smallskip
We can upper bound fractional minimum positive semidefinite rank by the non-fractional version by using Corollary \ref{mrrleqrtimesmr}. Again, recall that this coincides with the literature if and only if the graph $G$ has no isolated vertices.

\begin{Corollary} \label{mrfleqmr} For every graph $G$,
\[ \mr_f^+(G) \leq \mr^+(G). \]
\end{Corollary}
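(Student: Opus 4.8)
The plan is to derive this directly from Corollary \ref{mrrleqrtimesmr}, which is the $r$-fold-to-classical comparison already established. Observe that $\mr_f^+(G)$ is defined as an infimum over all $r \in \mathbb{N}$ of the quantities $\frac{\mrr{r}^+(G)}{r}$, and Corollary \ref{mrrleqrtimesmr} tells us that every one of these quantities is at most $\mr^+(G)$. Since an infimum of a set of numbers each bounded above by a fixed constant is itself bounded above by that constant, the conclusion is immediate.

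Concretely, the single step is: for every $r \in \mathbb{N}$, Corollary \ref{mrrleqrtimesmr} gives $\frac{\mrr{r}^+(G)}{r} \leq \mr^+(G)$; taking the infimum over $r$ on the left-hand side preserves the inequality, yielding $\mr_f^+(G) = \inf_r \left\{ \frac{\mrr{r}^+(G)}{r} \right\} \leq \mr^+(G)$. One could alternatively note that the $r=1$ term of the infimand already equals $\mr^+(G)$ (since $\mrr{1}^+(G) = \mr^+(G)$), so the infimum is taken over a set containing $\mr^+(G)$ and is therefore at most $\mr^+(G)$; this avoids even invoking the corollary. Either phrasing works; I would use the corollary since the paper explicitly flags it as the intended tool.

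There is no real obstacle here — this is a one-line consequence of the definition of $\mr_f^+$ as an infimum and the already-proven bound. The only thing worth being careful about is the remark, already made in the excerpt, that this inequality (and the definition of $\mr^+$ being used) agrees with the minimum-rank literature precisely when $G$ has no isolated vertices; but since Corollary \ref{mrrleqrtimesmr} is stated for all graphs under the paper's conventions, the stated inequality holds for all graphs without qualification, and no case analysis is needed in the proof itself.

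\begin{proof}
By Corollary \ref{mrrleqrtimesmr}, for every $r \in \mathbb{N}$ we have $\frac{\mrr{r}^+(G)}{r} \leq \mr^+(G)$. Taking the infimum over $r \in \mathbb{N}$ yields
\[ \mr_f^+(G) = \inf_r \left\{ \frac{\mrr{r}^+(G)}{r} \right\} \leq \mr^+(G). \qedhere \]
\end{proof}
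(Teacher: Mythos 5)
Your proof is correct and matches the paper's intended argument exactly: the paper introduces this corollary with the remark that it follows ``by using Corollary \ref{mrrleqrtimesmr},'' i.e., by taking the infimum over $r$ of the bound $\frac{\mrr{r}^+(G)}{r} \leq \mr^+(G)$. Your alternative observation that the $r=1$ term already equals $\mr^+(G)$ is also valid, but the corollary-based route is the one the paper has in mind.
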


Since $\mrr{r}^+(G)$ is subadditive, we have the following corollary, which follows from Lemma \ref{fgtA41} (\cite{fgt}, Lemma A.4.1).

\begin{Corollary} \label{mrf-equals-lim} For every graph $G$,
\[ \mr_f^+(G) = \lim_{r \to \infty} \frac{\mrr{r}^+(G)}{r} \, , \]
and this limit exists.
\end{Corollary}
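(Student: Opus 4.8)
The plan is to read this off directly from Fekete's Lemma (Lemma \ref{fgtA41}), exactly as Corollary \ref{xirinfequalslim} was obtained from Lemma \ref{xirsubadditive} in Section \ref{sec-OSR-and-PR}. Define $g : \mathbb{N} \to \mathbb{R}$ by $g(r) = \mrr{r}^+(G)$. I would first check the two hypotheses of Lemma \ref{fgtA41}. Subadditivity of $g$ is precisely the content of Lemma \ref{mrrsubadditive}. Nonnegativity, $g(r) \geq 0$ for all $r$, is immediate since $\mrr{r}^+(G)$ is by definition a minimum over natural numbers $d$ (indeed $g(r) \geq r$, since a $(d;r)$-FOSR requires $r \leq d$).

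The one point deserving an explicit word is that $g$ is well-defined and finite, so that the infimum in the definition of $\mr^+_f(G)$ is over a nonempty set that is bounded below: a faithful orthogonal representation of $G$ in $\mathbb{C}^{|V(G)|}$ exists (equivalently, $\mathcal{H}^+(G) \neq \emptyset$), so $\mr^+(G) < \infty$, and then Corollary \ref{mrrleqrtimesmr} gives $\mrr{r}^+(G) \leq r \cdot \mr^+(G) < \infty$ for every $r$. With all hypotheses in hand, Lemma \ref{fgtA41} states that $\lim_{r \to \infty} g(r)/r$ exists and equals $\inf_{r \in \mathbb{N}} g(r)/r$; by the definition $\mr^+_f(G) = \inf_r \{ \mrr{r}^+(G)/r \}$, this infimum is exactly $\mr^+_f(G)$, which proves both the asserted equality and the existence of the limit.

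\textbf{Main obstacle.} There is essentially none: the substantive work (subadditivity) has already been done in Lemma \ref{mrrsubadditive}, and the rest is a routine invocation of Fekete's Lemma. The only place requiring any attention is the bookkeeping check that $g$ is finite and nonnegative so that Lemma \ref{fgtA41} genuinely applies — a single line, as above. This is the faithful-case analogue of the passage from Lemma \ref{xirsubadditive} through Corollary \ref{xirinfequalslim}, so the proof can reasonably be stated very briefly or even omitted in the same spirit as the other parallel results in this section.
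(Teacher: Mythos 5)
Your proposal is correct and matches the paper's argument exactly: the paper obtains this corollary by invoking Lemma \ref{fgtA41} (Fekete's Lemma) together with the subadditivity established in Lemma \ref{mrrsubadditive}, precisely as in the passage from Lemma \ref{xirsubadditive} to Corollary \ref{xirinfequalslim}. Your added checks of finiteness and nonnegativity are sound bookkeeping that the paper leaves implicit.
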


We conclude this section with an example that gives further insight into these new parameters.

\begin{Example} \label{ex-mrr-of-P4}
{\rm
Let $r \in \mathbb{N}$ and consider the graph $G = P_4$ with $V(P_4) = \{1,2,3,4\}$ and $E(P_4) = \{12, 23, 34\}$. With $\bv{e}_i$ as the $i^{th}$ standard basis vector in $\mathbb{C}^{2r+1}$, we can verify that the following is a valid $(2r+1;r)$-FOSR for $P_4$: $S_1 = \range([\bv{e}_1, \, \bv{e}_2, \, \ldots, \, \bv{e}_{r}])$, $S_2 = \range([ \bv{e}_2, \, \bv{e}_3, \, \ldots, \, \bv{e}_{r+1}])$, $S_3 = \range([\bv{e}_{r+1}, \, \bv{e}_{r+2}, \, \ldots, \, \bv{e}_{2r}])$, $S_4 = \range([\bv{e}_{r+2}, \, \bv{e}_{r+3}, \, \ldots, \, \bv{e}_{2r+1}])$. Therefore, $\mrr{r}^+(P_4) \leq 2r+1$. Suppose that $\{Q_u\}_{u \in V(P_4)}$ is a $(2r; r)$-FOSR for $P_4$; we show that such a representation cannot exist. Since $13, 14 \notin E(P_4)$, $Q_1 \perp Q_3$ and $Q_1 \perp Q_4$. The underlying space is $\mathbb{C}^{2r}$ and each subspace $Q_i$ is $r$-dimensional, so we must therefore have $Q_3 = Q_4 = Q_1^\perp$. Now, $23 \in E(P_4)$, so $Q_2 \not\perp Q_3$, but $24 \notin E(P_4)$, so it also follows that $Q_2 \perp Q_4$. Since $Q_3 = Q_4$, this is a contradiction; thus there is no $(2r;r)$-FOSR for $P_4$, and so $\mrr{r}^+(P_4) = 2r+1$. Using the limit characterization of $\mr_f^+$, it follows that $\mr_f^+(P_4) = \lim_{r \to \infty} \frac{2r+1}{r} = 2$.
}
\end{Example}

This example demonstrates that the infimum in the definition of the fractional minimum positive semidefinite rank cannot be replaced with a minimum, even when $\mr_f^+$ is a rational number. Additionally, since $\mr^+(P_4) = 3$, the graph $G = P_4$ satisfies $\mr_f^+(G) < \mr^+(G)$.

\subsection{Faithful $\mathbit{d/r}$-projective representations}

Let $G$ be a graph and $d, r \in \mathbb{N}$ with $r \leq d$. A \emph{faithful $d/r$-projective representation}, or faithful $d/r$-representation for short, is an assignment of matrices $\{P_u\}_{u \in V(G)}$ to the vertices of $G$ such that
\begin{itemize}
\item for each $u \in V(G)$, $P_u \in \mathbb{C}^{d \times d}$, $\rank P_u  = r$, $P_u^* = P_u$, and $P_u^2 = P_u$; and
\item $P_u P_v = 0$ if and only if $uv \notin E(G)$.
\end{itemize}
A faithful $d/r$-representation for $G$ is a $d/r$-representation for $\overline{G}$, but the reverse is not necessarily true.

It is convenient to note that a $(d;r)$ faithful orthogonal subspace representation for $G$ is equivalent to a faithful $d/r$-representation. The proof is analogous to that of Proposition \ref{drOSRiffdr}; as before, we will omit such parallel proofs.

\begin{Proposition} \label{drFOSRifffaithfuldr}
A graph $G$ has a $(d;r)$ faithful orthogonal subspace representation if and only if $G$ has a faithful $d/r$-representation.
\end{Proposition}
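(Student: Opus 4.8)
The statement to prove is Proposition~\ref{drFOSRifffaithfuldr}: a graph $G$ has a $(d;r)$ faithful orthogonal subspace representation if and only if $G$ has a faithful $d/r$-representation. The plan is to mimic the proof of Proposition~\ref{drOSRiffdr} verbatim in its algebraic content, simply replacing the one-directional implication ``$uv \in E(G) \Rightarrow S_u \perp S_v$'' by the biconditional ``$S_u \perp S_v \iff uv \notin E(G)$'' throughout. Since that earlier proof already established the chain of equivalences
\[ S_u \perp S_v \iff X_u^* X_v = 0 \iff X_u X_u^* X_v X_v^* = 0 \iff P_u P_v = 0, \]
the upgrade from ``if'' to ``if and only if'' is essentially free.

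Concretely, for the forward direction I would start with a $(d;r)$-FOSR $\{S_u\}_{u \in V(G)}$, pick a basis matrix $X_u \in \mathbb{C}^{d \times r}$ for each $S_u$, and set $P_u = X_u X_u^*$. As in Proposition~\ref{drOSRiffdr}, $P_u$ is a rank-$r$ orthogonal projector in $\mathbb{C}^{d \times d}$. Then invoking the displayed chain of equivalences, $P_u P_v = 0 \iff S_u \perp S_v \iff uv \notin E(G)$, which is exactly the faithfulness condition for a faithful $d/r$-representation. For the converse, start with a faithful $d/r$-representation $\{P_u\}$, take a reduced singular value decomposition $P_u = X_u I_r X_u^*$ with $X_u \in \mathbb{C}^{d \times r}$, and put $S_u = \range(P_u) = \range(X_u)$, an $r$-dimensional subspace of $\mathbb{C}^d$. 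Again the chain of equivalences gives $S_u \perp S_v \iff P_u P_v = 0 \iff uv \notin E(G)$, so $\{S_u\}$ is a $(d;r)$-FOSR.

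There is essentially no obstacle here: the only thing to be careful about is that both implications of the biconditional are used, rather than just one, so one must note explicitly that the chain of equivalences is symmetric and hence transports the ``only if'' direction as well as the ``if'' direction. Everything else — that $X_u X_u^*$ is a rank-$r$ Hermitian idempotent, that a rank-$r$ projector admits a reduced SVD of the stated form, that $\range(P_u) = \range(X_u)$ — is identical to what was verified in the proof of Proposition~\ref{drOSRiffdr}. Accordingly, in keeping with the paper's stated convention of omitting proofs parallel to earlier ones, I would present this as a one-line remark that the proof is the same as that of Proposition~\ref{drOSRiffdr} with the equivalence $S_u \perp S_v \iff P_uP_v = 0$ combined with the faithfulness condition $S_u \perp S_v \iff uv \notin E(G)$, or alternatively write out the two short paragraphs above in full for completeness.
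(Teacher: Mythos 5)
Your proposal is correct and matches the paper's intent exactly: the paper omits the proof, stating only that it is analogous to that of Proposition~\ref{drOSRiffdr}, and your write-up is precisely that analogous argument, correctly noting that the chain of equivalences $S_u \perp S_v \iff X_u^* X_v = 0 \iff P_u P_v = 0$ is a biconditional and hence carries the faithfulness condition in both directions. Nothing further is needed.
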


An immediate corollary gives an alternate definition for $\mr_f^+(G)$.
\begin{Corollary} For every graph $G$,
\[ \mr^+_f(G) = \inf_{d,r} \left\{ \frac{d}{r} : G \text{ has a faithful $d/r$-representation} \right\}. \]
\end{Corollary}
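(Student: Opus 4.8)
The plan is to prove this exactly as Theorem~\ref{xifandxir} was proved: unwind the definition of $\mr^+_f(G)$ through a short chain of elementary rewrites of infima and minima, and close the chain with Proposition~\ref{drFOSRifffaithfuldr}. Concretely, I would begin from $\mr^+_f(G)=\inf_r\{\mrr{r}^+(G)/r\}$, substitute the defining expression $\mrr{r}^+(G)=\min\{d:G\text{ has a }(d;r)\text{-FOSR}\}$, and observe that for fixed $r$ we have $\frac1r\min\{d:\cdots\}=\min_d\{d/r:\cdots\}$. Collapsing the nested $\inf_r$ and $\min_d$ into a single joint infimum over the pair $(d,r)$ then gives $\mr^+_f(G)=\inf_{d,r}\{d/r:G\text{ has a }(d;r)\text{-FOSR}\}$.

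The last step is to replace ``has a $(d;r)$-FOSR'' by ``has a faithful $d/r$-representation'' in the description of the index set. By Proposition~\ref{drFOSRifffaithfuldr}, for each fixed pair $(d,r)$ the graph $G$ admits one type of object if and only if it admits the other; hence the two sets of admissible pairs $(d,r)$, and therefore the two sets of ratios $d/r$ over which the infimum is taken, are literally equal. This produces the asserted identity, and I would display the whole argument as a short stack of equalities in the style of the proof of Theorem~\ref{xifandxir}.

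I do not anticipate a real obstacle: every step is formal, and the substantive ingredient---the equivalence of the two representation types---is already available. The only point needing a remark is that the manipulation $\frac1r\min\{d:\cdots\}=\min_d\{d/r:\cdots\}$ requires the inner set of admissible $d$ to be nonempty, i.e., that $\mrr{r}^+(G)$ is a well-defined finite number; this holds because every graph has a $(d;r)$-FOSR for some $d$ (for instance in $\mathbb{C}^{r\cdot\mr^+(G)}$, by ``dilating'' each unit vector of a faithful orthogonal representation of $G$ into an $r$-dimensional subspace), which is essentially the content of Corollary~\ref{mrrleqrtimesmr}.
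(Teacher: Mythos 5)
Your proposal is correct and matches the paper's intent exactly: the paper leaves this as an ``immediate corollary'' of Proposition~\ref{drFOSRifffaithfuldr}, with the understood argument being precisely your chain of rewrites mirroring the proof of Theorem~\ref{xifandxir}. Your added remark on nonemptiness of the inner minimization (via Corollary~\ref{mrrleqrtimesmr}) is a harmless and sensible precaution.
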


\begin{Corollary} \label{xifleqmrf}
For any graph $G$ with complement $\overline{G}$,
\[ \xi_f(\overline{G}) \leq \mr_f^+(G) \leq \mr^+(G). \]
\end{Corollary}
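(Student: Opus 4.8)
The right-hand inequality $\mr_f^+(G) \leq \mr^+(G)$ is exactly Corollary \ref{mrfleqmr}, so the only thing requiring an argument is $\xi_f(\overline{G}) \leq \mr_f^+(G)$. The plan is to exploit the observation, made just after the definition of $(d;r)$-FOSR, that every $(d;r)$ faithful orthogonal subspace representation for $G$ is in particular a $(d;r)$ orthogonal subspace representation for $\overline{G}$: indeed, an FOSR for $G$ requires $S_u \perp S_v$ exactly when $uv \notin E(G)$, i.e., exactly when $uv \in E(\overline{G})$, whereas an OSR for $\overline{G}$ only requires $S_u \perp S_v$ when $uv \in E(\overline{G})$, so the FOSR condition is strictly stronger.

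The steps, in order, are: (i) Fix $r \in \mathbb{N}$ and let $d = \mrr{r}^+(G)$, so that $G$ has a $(d;r)$-FOSR $\{S_u\}_{u \in V(G)}$. (ii) By the observation above, $\{S_u\}_{u \in V(G)}$ is also a $(d;r)$-OSR for $\overline{G}$, hence $\xir{r}(\overline{G}) \leq d = \mrr{r}^+(G)$. (iii) Divide by $r$ to get $\frac{\xir{r}(\overline{G})}{r} \leq \frac{\mrr{r}^+(G)}{r}$ for every $r$. (iv) Take the infimum over $r$ on both sides (or, equivalently, use the limit characterizations from Corollaries \ref{xirinfequalslim} and \ref{mrf-equals-lim}): $\xi_f(\overline{G}) = \inf_r \frac{\xir{r}(\overline{G})}{r} \leq \inf_r \frac{\mrr{r}^+(G)}{r} = \mr_f^+(G)$. (v) Combine with Corollary \ref{mrfleqmr} to obtain the full chain.

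There is essentially no obstacle here: the result is a routine bookkeeping consequence of the definitions together with the already-established limit/infimum formulas for $\xi_f$ and $\mr_f^+$. The one point worth stating carefully is why an FOSR for $G$ is an OSR for $\overline{G}$ but not conversely — an OSR for $\overline{G}$ only guarantees orthogonality for non-edges of $G$, with no constraint forcing non-orthogonality across edges of $G$ — which explains why the inequality $\xi_f(\overline{G}) \leq \mr_f^+(G)$ need not be an equality at the $r$-fold level; the question of whether equality holds in the limit is precisely the content of the paper's main theorem in Section \ref{subsec-equality}, and is not needed here.
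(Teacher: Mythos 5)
Your proof is correct and follows essentially the same route as the paper: the key fact in both is that a faithful representation of $G$ is automatically a (non-faithful) representation of $\overline{G}$, combined with Corollary \ref{mrfleqmr}. The only cosmetic difference is that you phrase this in the subspace (FOSR/OSR) language and pass through the $r$-fold parameters $\xir{r}$ and $\mrr{r}^+$ before taking infima, whereas the paper states it directly for faithful $d/r$-projective representations; these are equivalent by Propositions \ref{drOSRiffdr} and \ref{drFOSRifffaithfuldr}.
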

\begin{proof}
This follows from the fact that any faithful $d/r$-representation for $G$ is also a $d/r$-\break representation for $\overline{G}$, as well as from Corollary \ref{mrfleqmr}. \qquad
\end{proof}

\subsection{Relation to positive semidefinite matrices}

In this section, we connect $(d;r)$ faithful orthogonal subspace representations to positive semidefinite matrices, thus generalizing the known results for the $r = 1$ case (when the graph in question has no isolated vertices) and connecting $\mrr{r}^+(G)$ to the rank of a positive semidefinite matrix.

We begin with some definitions. Let $G$ be a graph on $n$ vertices and suppose that $V(G) = \brackets{1}{n}$. For some $r \in \mathbb{N}$, let $A \in \mathbb{C}^{nr \times nr}$ be partitioned into an $n \times n$ block matrix $[A_{ij}]$, where $A_{ij}$ is the $r \times r$ submatrix in (block) row $i$ and (block) column $j$ of $A$. We say that the matrix $A$ \emph{$r$-fits} $G$ if $A_{ii} = I_r$ for each $i \in V(G)$ and $A_{ij} = 0$ if and only if $ij \notin E(G)$, and define the set 
\[ \Hr{r}^+(G) = \left\{ A \in \mathbb{C}^{nr \times nr} : A \mgeq 0 \text{ and $A$ $r$-fits $G$} \right\}. \]

\begin{Example} {\rm
We provide a simple example for the $r = 2$ case. Let $G = P_3$, the path on 3 vertices, with $V(G) = \{1,2,3\}$ and $E(G) = \{ 12, 23 \}$. Choosing $X = [ \bv{e}_1 \, \bv{e}_2 \, | \, \bv{e}_1 \, \bv{e}_4 \, | \, \bv{e}_3 \, \bv{e}_4]$, where $\bv{e}_j$ is the $j^{th}$ standard basis vector in $\mathbb{C}^{4}$, we can verify that

\[ A = X^* X = \left[ \begin{array}{cc|cc|cc} 
1 & 0 & 1 & 0 & 0 & 0 \\ 
0 & 1 & 0 & 0 & 0 & 0 \\ \hline
1 & 0 & 1 & 0 & 0 & 0 \\
0 & 0 & 0 & 1 & 0 & 1 \\ \hline
0 & 0 & 0 & 0 & 1 & 0 \\
0 & 0 & 0 & 1 & 0 & 1
\end{array} \right] \in \Hr{2}^+(P_3). \]
}
\end{Example}

This constructive example gives an intuitive feel for one direction of the proof of the main result of this section.

\begin{Theorem} For every graph $G$ on $n$ vertices and any $r \in \mathbb{N}$,
\[ \mrr{r}^+(G) = \min \left\{ \rank A  : A \in \Hr{r}^+(G) \right\}. \]
\end{Theorem}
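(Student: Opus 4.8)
The plan is to prove the two inequalities separately, in each case passing between a $(d;r)$-FOSR for $G$ and a matrix in $\Hr{r}^+(G)$ via a Gram-matrix / factorization argument, exactly as in the classical $r=1$ case (cf.\ the discussion after \eqref{def-mrplus} and \cite{HLA2ch46}).

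For the inequality $\min\{\rank A : A \in \Hr{r}^+(G)\} \leq \mrr{r}^+(G)$, I would start from a $(d;r)$-FOSR $\{S_u\}_{u\in V(G)}$ with $d = \mrr{r}^+(G)$, choose a basis matrix $X_u \in \mathbb{C}^{d\times r}$ for each $S_u$, and form $X = [\,X_1 \mid X_2 \mid \cdots \mid X_n\,] \in \mathbb{C}^{d \times nr}$. Set $A = X^*X$; then $A \mgeq 0$, $\rank A = \rank X \leq d$, the $(i,j)$ block of $A$ is $X_i^*X_j$, so $A_{ii} = X_i^*X_i = I_r$ (orthonormal columns) and, for $i\neq j$, $A_{ij} = X_i^*X_j = 0$ exactly when $S_i \perp S_j$, i.e.\ exactly when $ij \notin E(G)$ by the faithfulness of the FOSR. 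Hence $A \in \Hr{r}^+(G)$ and $\rank A \leq \mrr{r}^+(G)$. (This is precisely the pattern illustrated in the $P_3$ example just above the theorem.)

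For the reverse inequality $\mrr{r}^+(G) \leq \min\{\rank A : A \in \Hr{r}^+(G)\}$, I would take $A \in \Hr{r}^+(G)$ with $\rank A = d$ minimal, factor $A = X^*X$ with $X \in \mathbb{C}^{d\times nr}$ (possible since $A \mgeq 0$), and partition the columns of $X$ into $n$ consecutive blocks $X_1,\dots,X_n$, each $X_i \in \mathbb{C}^{d\times r}$, so that $A_{ij} = X_i^*X_j$. Since $A_{ii} = I_r$, the columns of each $X_i$ are orthonormal, so $S_i := \range(X_i)$ is an $r$-dimensional subspace of $\mathbb{C}^d$ and $X_i$ is a basis matrix for it. For $i \neq j$, $S_i \perp S_j \iff X_i^*X_j = 0 \iff A_{ij} = 0 \iff ij \notin E(G)$, so $\{S_i\}$ is a $(d;r)$-FOSR and $\mrr{r}^+(G) \leq d$.

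The one point that needs a little care — and the only place the argument is not completely mechanical — is the second direction: I must make sure the factorization $A = X^*X$ can be chosen with $X$ having exactly $d = \rank A$ rows (use a reduced factorization, e.g.\ $A = U\Lambda U^*$ with $\Lambda$ the $d\times d$ diagonal of positive eigenvalues and take $X = \Lambda^{1/2}U^*$ restricted to the relevant coordinates), and that the diagonal-block condition $A_{ii}=I_r$ genuinely forces orthonormality of the columns of each $X_i$ rather than merely linear independence. Both are routine, but they are the crux: once the column blocks of $X$ are orthonormal, the equivalence of zero blocks with non-edges and orthogonality of subspaces is immediate from the definitions of $r$-fitting and of a FOSR. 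Isolated vertices cause no difficulty here, since the condition $A_{ii}=I_r$ is imposed regardless of whether vertex $i$ has neighbours.
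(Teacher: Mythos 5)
Your proposal is correct and follows essentially the same route as the paper's proof: both directions pass between a $(d;r)$-FOSR and a Gram matrix $A = X^*X$ with $X = [\,X_1 \mid \cdots \mid X_n\,]$, using $A_{ij} = X_i^*X_j$ to match zero blocks with non-edges and $A_{ii} = I_r$ with orthonormality of the column blocks. The technical points you flag (a reduced factorization with exactly $\rank A$ rows, and $A_{ii}=I_r$ forcing orthonormal columns) are handled the same way in the paper, which simply asserts the existence of $X \in \mathbb{C}^{\ell \times nr}$ with $B = X^*X$.
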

\begin{proof}
Let $d = \mrr{r}^+(G)$ and let $\ell = \min \left\{ \rank A  : A \in \Hr{r}^+(G) \right\}$.

First, assume that $\{S_i \}$ is a $(d;r)$ faithful orthogonal subspace representation for $G$ and for each $i \in V(G)$ let $X_i \in \mathbb{C}^{d \times r}$ be a basis matrix for $S_i$. Define $X = [ X_1 \, | \, X_2 \, |  \cdots | \, X_n ] \in \mathbb{C}^{d \times nr}$ and let $B = X^* X \in \mathbb{C}^{nr \times nr}$. We see immediately that $B \mgeq 0$ and $\rank B = \rank X  \leq d$. Partitioning $B$ into an $n \times n$ block matrix with blocks $[B_{ij}]$ of size $r \times r$, we have $B_{ij} = X_i^* X_j$. Since $S_i \perp S_j$ if and only if $X_i^* X_j = 0$, we have $B_{ij} = 0$ if and only if $S_i \perp S_j$, which occurs if and only if $ij \notin E(G)$. Additionally, since $X_i$ has orthonormal columns, we have $B_{ii} = I_r$ for each $i$. Therefore, $B \in \Hr{r}^+(G)$, so $\min \left\{ \rank A  : A \in \Hr{r}^+(G) \right\} \leq \rank B \leq d = \mrr{r}^+(G)$.

For the reverse inequality, suppose that $B \in \Hr{r}^+(G)$ and $\rank B  = \ell$. Then there exists a matrix $X \in \mathbb{C}^{\ell \times nr}$ such that $B = X^* X$.  Partition $B$ into $r \times r$ blocks $[B_{ij}]$ and partition $X$ into $\ell \times r$ blocks as $X = [ X_1 \, | \, X_2 \, | \cdots | \, X_n ]$. For each vertex $i \in V(G)$, let $S_i = \range(X_i) \subseteq \mathbb{C}^\ell$. Since $X_i^* X_i = I_r$, we have $\rank X_i  = r$, so each $S_i$ is an $r$-dimensional subspace of $\mathbb{C}^\ell$. Additionally, $X_i^*X_j = B_{ij} = 0$ if and only if $ij \notin E(G)$, so $S_i \perp S_j$ if and only if $ij \notin E(G)$. Therefore, $\{S_i\}$ is an $(\ell;r)$ faithful orthogonal subspace representation for $G$, so $\mrr{r}^+(G) \leq \ell = \min \left\{ \rank A  : A \in \Hr{r}^+(G) \right\}$ and thus equality holds. \qquad
\end{proof}

This matrix-based representation is a powerful theoretical tool that allows us to simplify the proofs of some properties of $r$-fold minimum positive semidefinite rank, as well as to more clearly draw parallels to the existing and well-established $r=1$ case (although again, the connection to the literature requires that the graph in question has no isolated vertices).

The condition that $A_{ii} = I_r$ if $A$ $r$-fits a graph $G$ is a strong one, so we conclude this section with a weaker condition that will be used to further simplify proofs without sacrificing utility. We say that $A$ \emph{weakly $r$-fits} $G$ if $A_{ii}$ is a diagonal matrix with strictly positive diagonal entries for each $i \in V(G)$ and $A_{ij} = 0$ if and only if $ij \notin E(G)$. Clearly, any matrix that $r$-fits $G$ also weakly $r$-fits $G$.

\begin{Remark} {\rm
Suppose that $A$ weakly $r$-fits a graph $G$ and let $D = D_1 \oplus \cdots \oplus D_n$, where each $D_i$ is the inverse of the positive square root of $A_{ii}$, i.e., $D_i = A_{ii}^{-\frac{1}{2}}$. Then the matrix $B = DAD$ $r$-fits $G$, since $D$ is a diagonal matrix with strictly positive diagonal entries, so multiplication by $D$ does not change the zero pattern of $A$. Further, $\rank B = \rank A$, since $D$ has full rank.
}
\end{Remark}

This remark yields an immediate corollary to the previous theorem.

\begin{Corollary} For every graph $G$ on $n$ vertices and any $r \in \mathbb{N}$,
\[ \mrr{r}^+(G) = \min \left\{ \rank A : A \in \mathbb{C}^{nr \times nr}, \, A \mgeq 0 \text{ and $A$ weakly $r$-fits $G$} \right\}. \]
\end{Corollary}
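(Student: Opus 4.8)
The plan is to derive this corollary directly from the preceding Theorem together with the Remark, by showing that the two minimizations produce the same number. Write $m = \min\{\rank A : A \in \Hr{r}^+(G)\}$, which equals $\mrr{r}^+(G)$ by the Theorem, and write $m' = \min\{\rank A : A \in \mathbb{C}^{nr\times nr},\ A \mgeq 0,\ A \text{ weakly } r\text{-fits } G\}$. It suffices to prove $m = m'$; both minima are attained since the relevant feasible sets are nonempty (the set defining $m$ is nonempty by the Theorem, and it is contained in the set defining $m'$) and ranks lie in $\{0,1,\dots,nr\}$.

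For the inequality $m' \le m$, I would observe that if $A$ $r$-fits $G$ then each diagonal block is $A_{ii} = I_r$, a diagonal matrix with strictly positive diagonal entries, so $A$ also weakly $r$-fits $G$; hence every matrix feasible for $m$ is feasible for $m'$, and minimizing rank over the larger set cannot increase the value.

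For the reverse inequality $m \le m'$, I would take any $A \mgeq 0$ that weakly $r$-fits $G$ with $\rank A = m'$ and invoke the construction of the Remark: set $D = D_1 \oplus \cdots \oplus D_n$ with $D_i = A_{ii}^{-\frac12}$, which is well defined because each $A_{ii}$, being diagonal with positive diagonal entries, is positive definite. Then $B = DAD$ is a congruence of $A$ by the invertible real diagonal matrix $D$, so $B \mgeq 0$ and $\rank B = \rank A = m'$; moreover the $(i,j)$ block of $B$ is $D_i A_{ij} D_j$, and since $D_i, D_j$ are invertible this block is zero exactly when $A_{ij}$ is zero, i.e., exactly when $ij \notin E(G)$, while $B_{ii} = D_i A_{ii} D_i = I_r$. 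Thus $B \in \Hr{r}^+(G)$, so $m \le \rank B = m'$.

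Combining the two inequalities gives $m = m'$, and by the Theorem this common value is $\mrr{r}^+(G)$, as claimed. There is essentially no obstacle: the heart of the argument is already contained in the Remark (a positive diagonal congruence preserves both positive semidefiniteness and the block zero/nonzero pattern while preserving rank), and the only point to state with care is that $A_{ii}$ is genuinely invertible so that $A_{ii}^{-1/2}$ is meaningful — this is exactly what the word ``strictly'' in the definition of weakly $r$-fitting guarantees.
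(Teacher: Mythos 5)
Your proposal is correct and matches the paper's intended argument exactly: the paper derives this corollary immediately from the preceding theorem together with the Remark on the diagonal congruence $B = DAD$, which is precisely the two-inequality argument you spell out. The only thing you add is the explicit (and harmless) verification of details the paper leaves implicit.
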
 

\subsection{Properties of mr$\mathbit{_{[r]}^+(G)}$ and mr$\mathbit{_f^+(G)}$}

In this section, we prove numerous results regarding properties of $r$-fold and fractional minimum positive semidefinite rank, many of which extend known properties of $\mr^+$ to the new parameters.

\begin{Observation} \label{mrr-geq-alpha}
For every graph $G$ and all $r \in \mathbb{N}$, $\mrr{r}^+(G) \geq r \cdot \alpha(G)$.
\end{Observation}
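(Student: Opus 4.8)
The plan is to mimic the standard $r=1$ argument that $\mr^+(G) \geq \alpha(G)$, exploiting the ``only if'' direction of the faithfulness condition on a maximum independent set. Let $d = \mrr{r}^+(G)$ and fix a $(d;r)$ faithful orthogonal subspace representation $\{S_u\}_{u \in V(G)}$ for $G$, so each $S_u$ is an $r$-dimensional subspace of $\mathbb{C}^d$. Choose an independent set $W \subseteq V(G)$ with $|W| = \alpha(G)$. For any two distinct $u, v \in W$ we have $uv \notin E(G)$, and hence $S_u \perp S_v$ by definition of a FOSR. Thus $\{S_u\}_{u \in W}$ is a family of $\alpha(G)$ pairwise orthogonal $r$-dimensional subspaces of $\mathbb{C}^d$.

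The key step is then to observe that pairwise orthogonal subspaces have linearly independent union-of-bases: picking an orthonormal basis of each $S_u$ ($u \in W$) and concatenating them yields an orthonormal set of $r\,\alpha(G)$ vectors in $\mathbb{C}^d$. Since an orthonormal set in $\mathbb{C}^d$ has at most $d$ elements, we conclude $d \geq r\,\alpha(G)$, i.e. $\mrr{r}^+(G) \geq r\,\alpha(G)$.

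An alternative route, should one prefer the matrix viewpoint, is to invoke the characterization $\mrr{r}^+(G) = \min\{\rank A : A \in \Hr{r}^+(G)\}$ proved earlier: for $A \in \Hr{r}^+(G)$, the principal submatrix of $A$ on the block rows and columns indexed by a maximum independent set $W$ is exactly $I_{r\alpha(G)}$ (diagonal blocks equal $I_r$, off-diagonal blocks vanish on non-edges), and the rank of a principal submatrix is at most the rank of $A$, giving $\rank A \geq r\,\alpha(G)$.

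I do not anticipate a genuine obstacle here; this is a routine observation (which is presumably why it is stated without proof in the excerpt), and it generalizes the well-known inequality $\mr^+(G) \geq \alpha(G)$ to the $r$-fold setting. The only point requiring a line of justification is the elementary fact that a sum of pairwise orthogonal subspaces has dimension equal to the sum of the dimensions.
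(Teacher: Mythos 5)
Your proof is correct and is exactly the routine argument the paper has in mind: the Observation is stated without proof, and your main route (pairwise orthogonal $r$-dimensional subspaces on an independent set force $d \geq r\,\alpha(G)$) is the standard generalization of the $r=1$ case. Both your subspace argument and your alternative via the principal submatrix of a matrix in $\Hr{r}^+(G)$ are valid; nothing further is needed.
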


\begin{Proposition} \label{mrr-subgraph}
Let $r \in \mathbb{N}$ and let $H$ be an induced subgraph of $G$. Then $\mrr{r}^+(H) \leq \mrr{r}^+(G)$.
\end{Proposition}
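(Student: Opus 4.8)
The plan is to show that an induced subgraph inherits a faithful orthogonal subspace representation from $G$, exactly mirroring the argument for Proposition \ref{xir-subgraph} but with ``faithful'' replacing ``orthogonal.'' Concretely, I would let $d = \mrr{r}^+(G)$ and take a $(d;r)$-FOSR $\{S_u\}_{u \in V(G)}$ for $G$. The claim is that the restricted family $\{S_u\}_{u \in V(H)}$ is a $(d;r)$-FOSR for $H$. The first (and only substantive) step is to verify the faithfulness biconditional on $V(H)$: for $u, v \in V(H)$, we need $S_u \perp S_v$ if and only if $uv \notin E(H)$. Since each $S_u$ is still an $r$-dimensional subspace of $\mathbb{C}^d$, the dimension condition is immediate.

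The key point — and the reason we need $H$ to be an \emph{induced} subgraph rather than merely a subgraph — is that for $u, v \in V(H)$ we have $uv \in E(H) \iff uv \in E(G)$. Given that, the FOSR property of $\{S_u\}_{u \in V(G)}$ gives $S_u \perp S_v \iff uv \notin E(G) \iff uv \notin E(H)$, which is exactly what is required. (Contrast this with Proposition \ref{xir-subgraph}, where only one direction of the edge condition matters, so an arbitrary subgraph suffices; here both directions are needed, forcing the induced hypothesis.) This establishes that $\{S_u\}_{u \in V(H)}$ is a $(d;r)$-FOSR for $H$, and hence $\mrr{r}^+(H) \leq d = \mrr{r}^+(G)$.

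I do not anticipate a real obstacle here; the proof is a short direct verification. The only thing to be careful about is flagging explicitly where the ``induced'' hypothesis is used, since that is the genuine content distinguishing this statement from its non-faithful counterpart. A clean two-sentence proof suffices:

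\begin{proof}
Let $d = \mrr{r}^+(G)$ and let $\{S_u\}_{u \in V(G)}$ be a $(d;r)$ faithful orthogonal subspace representation for $G$. For $u, v \in V(H)$, since $H$ is an induced subgraph of $G$ we have $uv \in E(H)$ if and only if $uv \in E(G)$; hence $S_u \perp S_v$ if and only if $uv \notin E(G)$, which holds if and only if $uv \notin E(H)$. Thus $\{S_u\}_{u \in V(H)}$ is a $(d;r)$ faithful orthogonal subspace representation for $H$, so $\mrr{r}^+(H) \leq d = \mrr{r}^+(G)$. \qquad
\end{proof}
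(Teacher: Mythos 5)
Your proof is correct and is essentially identical to the paper's: both restrict a $(d;r)$-FOSR for $G$ to $V(H)$ and use the induced-subgraph hypothesis to verify the faithfulness biconditional. Your explicit remark on where ``induced'' is needed (versus the one-directional condition in Proposition~\ref{xir-subgraph}) is a nice touch but not a difference in method.
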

\begin{proof}
For any $u,v \in V(H)$, $uv \in E(H)$ if and only if $uv \in E(G)$, since $H$ is induced. Therefore any $(d;r)$ faithful orthogonal subspace representation for $G$ provides a $(d;r)$ faithful orthogonal subspace representation for $H$, and the result follows immediately. \qquad
\end{proof}

\begin{Proposition} \label{mrr-dcup}
If $G = \bigdcup_{i=1}^t G_i$ for some graphs $\{G_i\}_{i=1}^t$, then $\mrr{r}^+(G) = \sum_{i=1}^t \mrr{r}^+(G_i)$ for each $r \in \mathbb{N}$.
\end{Proposition}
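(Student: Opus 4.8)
The plan is to establish the equality $\mrr{r}^+(G) = \sum_{i=1}^t \mrr{r}^+(G_i)$ by proving the two inequalities separately, reducing to the $t=2$ case and then iterating (the general case follows by a routine induction on $t$, using that a disjoint union of disjoint unions is again a disjoint union). Throughout I would set $d_i = \mrr{r}^+(G_i)$.

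For the ``$\leq$'' direction, I would take a $(d_i;r)$-FOSR $\{S_u^i\}_{u \in V(G_i)}$ for each $G_i$, with basis matrices $X_u^i \in \mathbb{C}^{d_i \times r}$, and build a representation for $G$ inside $\mathbb{C}^{d_1 + \cdots + d_t}$ by block-stacking: a vertex $u \in V(G_i)$ is assigned the subspace $S_u$ spanned by the columns of the matrix that has $X_u^i$ in block row $i$ and zeros elsewhere. Each $S_u$ is $r$-dimensional. The key point is to check faithfulness: if $u,v$ lie in the same component $G_i$, then $X_u^*X_v = (X_u^i)^*(X_v^i)$, which is zero iff $uv \notin E(G_i)$ iff $uv \notin E(G)$; if $u \in V(G_i)$ and $v \in V(G_j)$ with $i \neq j$, then the supports of the stacked matrices are disjoint block rows, so $X_u^* X_v = 0$, and indeed $uv \notin E(G)$ since $G$ is a \emph{disjoint} union. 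Hence $\{S_u\}$ is a $(\sum d_i; r)$-FOSR for $G$, giving $\mrr{r}^+(G) \leq \sum_i d_i$. (This is entirely parallel to Lemma \ref{xirsubadditive}, except the dimension adds rather than the fold.)

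For the ``$\geq$'' direction, I would use the matrix characterization $\mrr{r}^+(G) = \min\{\rank A : A \in \Hr{r}^+(G)\}$ established earlier. Let $A \in \Hr{r}^+(G)$ achieve $\rank A = \mrr{r}^+(G)$. Because $G$ is a disjoint union, any two vertices in different components are non-adjacent, so the corresponding off-diagonal $r \times r$ blocks of $A$ vanish; reordering the block rows/columns by component, $A$ is block diagonal, $A = A^{(1)} \oplus \cdots \oplus A^{(t)}$, where $A^{(i)}$ is indexed by $V(G_i)$. Each $A^{(i)}$ is positive semidefinite (a principal submatrix of a PSD matrix) and $r$-fits $G_i$ (the diagonal blocks are still $I_r$ and the surviving off-diagonal pattern is exactly $E(G_i)$ since $G_i$ is induced in $G$), so $A^{(i)} \in \Hr{r}^+(G_i)$ and $\rank A^{(i)} \geq \mrr{r}^+(G_i) = d_i$. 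Then $\mrr{r}^+(G) = \rank A = \sum_i \rank A^{(i)} \geq \sum_i d_i$, completing the argument.

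The two directions together give the claimed equality for $t=2$, and iterating yields the general statement. I do not anticipate a genuine obstacle here; the only place requiring a moment's care is making sure both halves genuinely use that the union is \emph{disjoint} (so that inter-component pairs are non-adjacent, forcing orthogonality in one direction and block-diagonality in the other) rather than merely that the $G_i$ are induced subgraphs — this is exactly what distinguishes this result from the subadditive upper bound $\mrr{r}^+(G) \leq \sum_i \mrr{r}^+(G_i)$ valid for general induced-subgraph covers. One should also note that this is precisely the point where the $\mr^+$ family differs from the $\xi$ family: for disjoint unions $\xir{r}$ takes a \emph{maximum} (Proposition \ref{xirdisjointmax}) whereas $\mrr{r}^+$ takes a \emph{sum}, because faithfulness forbids collapsing the ambient spaces on top of one another.
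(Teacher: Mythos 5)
Your proposal is correct and is in substance the paper's own argument: the paper also works through the matrix characterization, observing that (up to reordering by component) every $A \in \Hr{r}^+(G)$ is a direct sum $A_1 \oplus \cdots \oplus A_t$ with $A_i \in \Hr{r}^+(G_i)$, which gives both inequalities at once since rank is additive over direct sums. Your upper bound via block-stacked subspaces is just the Gram-matrix-free version of taking the direct sum of optimal $A_i$'s, so the two proofs coincide.
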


\begin{proof}
Suppose that $V(G) = \brackets{1}{n}$ and that $|V(G_i)| = n_i$ for $i = 1, 2, \ldots, t$. Further assume that $V(G_i) = \brackets{1 + \sum_{j=1}^{i-1} n_j}{\sum_{j=1}^i n_j}$, so that if $A \in \Hr{r}^+(G)$, then $A = A_1 \oplus A_2 \oplus \cdots \oplus A_t$, where $A_i \in \Hr{r}^+(G_i)$ for each $i$. Note that $\rank A  = \sum_{i=1}^t \rank A_i$. We therefore have
\begin{align*}
\mrr{r}^+(G)
	&=	\min \left\{ \rank A  : A \in \Hr{r}^+(G) \right\} \\
	&=	\min \left\{ \sum_{i=1}^t \rank A_i  : A_i \in \Hr{r}^+(G_i) \text{ for each $i$} \right\} \\
	&=	\sum_{i=1}^t \min \left\{ \rank A_i  : A_i \in \Hr{r}^+(G_i) \right\} \\
	&=	\sum_{i=1}^t \mrr{r}^+(G_i).  \qedhere
\end{align*}
\end{proof}

\begin{Theorem}
If $G = \bigcup_{i=1}^t G_i$ for some graphs $\{G_i\}_{i=1}^t$, then $\mrr{r}^+(G) \leq \sum_{i=1}^t \mrr{r}^+(G_i)$ for each $r \in \mathbb{N}$.
\end{Theorem}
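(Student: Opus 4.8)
The plan is to use the matrix characterization of $\mrr{r}^+$ rather than subspaces. By the characterization of $\mrr{r}^+(G)$ as the minimum rank of a positive semidefinite matrix that \emph{weakly} $r$-fits $G$ (the corollary at the end of the previous subsection), with $n = |V(G)|$ it suffices to build a matrix $A \in \mathbb{C}^{nr \times nr}$ with $A \mgeq 0$, $A$ weakly $r$-fitting $G$, and $\rank A \le \sum_{i=1}^t \mrr{r}^+(G_i)$. For each $i$, pick $A_i \in \Hr{r}^+(G_i)$ with $\rank A_i = \mrr{r}^+(G_i)$, and let $\tilde A_i \in \mathbb{C}^{nr \times nr}$ be the matrix that has $A_i$ sitting in the block rows and block columns indexed by $V(G_i)$ and zeros elsewhere; then $\tilde A_i \mgeq 0$ and $\rank \tilde A_i = \rank A_i$. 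I would take $A = \sum_{i=1}^t c_i \tilde A_i$ for positive reals $c_i$ to be chosen. No matter how the $c_i$ are chosen, $A \mgeq 0$, $\rank A \le \sum_i \rank \tilde A_i = \sum_i \mrr{r}^+(G_i)$, and since $V(G) = \bigcup_i V(G_i)$ the $v$-th diagonal block of $A$ is $\bigl(\sum_{i : v \in V(G_i)} c_i\bigr) I_r$, a diagonal matrix with strictly positive diagonal, as required.

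What remains is the off-diagonal zero pattern. The $(u,v)$ block of $A$ is $A_{uv} = \sum_{i : u,v \in V(G_i)} c_i (A_i)_{uv}$. If $uv \notin E(G)$, then $uv \notin E(G_i)$ for every $i$, so each summand $(A_i)_{uv}$ already vanishes and $A_{uv} = 0$ for free. If $uv \in E(G)$, then $uv \in E(G_i)$ for at least one $i$ (as $E(G) = \bigcup_i E(G_i)$), and for such an $i$ we have $u,v \in V(G_i)$ and $(A_i)_{uv} \neq 0$; the only danger is cancellation among the finitely many nonzero summands. I would eliminate this by choosing the $c_i$ on widely separated scales, say $c_i = N^i$ for a large integer $N$: if $i_0$ is the largest index with $u,v \in V(G_i)$ and $(A_i)_{uv} \neq 0$, the reverse triangle inequality gives $\|A_{uv}\| \ge N^{i_0}\|(A_{i_0})_{uv}\| - \sum_{i<i_0} N^i \|(A_i)_{uv}\| > 0$ once $N$ exceeds a threshold depending only on the finitely many fixed matrices $(A_i)_{uv}$. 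Taking one $N$ that works simultaneously for all (finitely many) edges $uv \in E(G)$ makes $A$ weakly $r$-fit $G$, and hence $\mrr{r}^+(G) \le \rank A \le \sum_{i=1}^t \mrr{r}^+(G_i)$.

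The main obstacle is exactly that cancellation phenomenon, and it is what distinguishes this from the non-faithful ($\xi$) union bound. For $\xir{r}$ the off-diagonal inner products were only ever required to \emph{be} zero on edges, so overlapping parts could only help; for faithfulness an edge of $G$ that lies inside several of the $V(G_i)$ forces several nonzero off-diagonal blocks whose sum must be kept nonzero, so a naïve stacking of the representations can destroy faithfulness, and the separated-scale weights are precisely the fix. (A genericity argument on the $c_i$ would work equally well, but the explicit choice $c_i = N^i$ avoids any measure-theoretic phrasing.) The recursive remark used for the analogous $\xir{r}$ statement is unnecessary here since the construction handles all $t$ parts at once.
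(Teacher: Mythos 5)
Your proof is correct and follows essentially the same route as the paper's: pad each minimizing $A_i \in \Hr{r}^+(G_i)$ with zeros to size $nr \times nr$, form a positive linear combination whose weights are chosen so that no off-diagonal block cancels, and invoke the weak $r$-fitting characterization of $\mrr{r}^+$. The only differences are organizational --- you treat all $t$ summands at once with the explicit weights $c_i = N^i$, whereas the paper handles $t=2$ with a single asserted $\beta>0$ and recurses --- and your explicit no-cancellation estimate is a welcome elaboration of the choice the paper merely asserts is possible.
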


\begin{proof}
We prove the result for the case where $t = 2$ and note that recursive application of this case will prove the more general one.

Let $V(G) = \brackets{1}{n}$ where $n > 0$ and assume that $V(G_1) \setminus V(G_2) = \brackets{1}{n_1}$, $V(G_1) \cap V(G_2) = \brackets{n_1+1}{n_1+c}$, and $V(G_2) \setminus V(G_1) = \brackets{n_1+c + 1}{n_1+c+n_2}$, where $n_1, n_2, c \geq 0$ (it is not assumed that each of these is strictly nonzero). Note that $n = n_1 + c + n_2$, and this ordering asserts that the first $n_1$ vertices (enumerating in the natural order) lie exclusively in $G_1$, the next $c$ are common to both graphs, and the last $n_2$ lie exclusively in $G_2$.

For $i = 1,2$, let $\mrr{r}^+(G_i) = d_i$ and let $A_i \in \Hr{r}^+(G_i)$ be chosen so that $\rank A_i  = d_i$. Notice that $A_1 \in \mathbb{C}^{(n_1+c)r \times (n_1+c)r}$ has its rows and columns indexed by $V(G_1) = \brackets{1}{n_1+c}$ and $A_2 \in \mathbb{C}^{(n_2+c)r \times (n_2+c)r}$ has its rows and columns indexed by $V(G_2) = \brackets{n_1+1}{n}$.

Let
\[ \hat A_1 = \left[ \begin{array}{cc} A_1 & 0 \\ 0 & 0 \end{array} \right] \in \mathbb{C}^{nr \times nr}, \qquad \hat A_2 = \left[ \begin{array}{cc} 0 & 0 \\ 0 & A_2 \end{array} \right] \in \mathbb{C}^{nr \times nr} \]
and define $A = \hat A_1 + \beta \hat A_2 \in \mathbb{C}^{nr \times nr}$, where $\beta > 0$ is chosen so that if $A$, $\hat A_1$, and $\hat A_2$ are partitioned into $n \times n$ block matrices with block size $r \times r$, then $A_{ij} = 0$ if and only if $(\hat A_1)_{ij} = 0$ and $(\hat A_2)_{ij} = 0$ (i.e., no cancellation of an entire block occurs).

Since $A$ is a positive linear combination of positive semidefinite matrices, $A \mgeq 0$, and by our choice of $\beta$ we see that $A$ weakly $r$-fits $G$. Therefore,
\[ \mrr{r}^+(G) \leq \rank A  \leq \rank \hat A_1 + \rank \hat A_2 = d_1 + d_2 = \mrr{r}^+(G_1) + \mrr{r}^+(G_2). \qedhere \]
\end{proof}

All of the results we have proven for $r$-fold minimum positive semidefinite rank can be extended to results for fractional minimum positive semidefinite rank. The proof is analogous to that of Theorem \ref{xifproperties} and is omitted.

\begin{Theorem} For every graph $G$:
\begin{enumerate}[i)]
\item $\mr_f^+(G) \geq \alpha(G)$.
\item If $H$ is an induced subgraph of $G$, then $\mr_f^+(H) \leq \mr_f^+(G)$.
\item If $G = \bigdcup_{i=1}^t G_i$ for some graphs $\{ G_i \}_{i=1}^t$, then $\mr_f^+(G) = \sum_{i=1}^t \mr_f^+(G_i)$.
\item If $G = \bigcup_{i=1}^t G_i$ for some graphs $\{ G_i \}_{i=1}^t$, then $\mr_f^+(G) \leq \sum_{i=1}^t \mr_f^+(G_i)$.
\end{enumerate}
\end{Theorem}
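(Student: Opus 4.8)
The plan is to deduce each of the four claims from the corresponding $r$-fold result (Observation \ref{mrr-geq-alpha} and Propositions/Theorems \ref{mrr-subgraph}, \ref{mrr-dcup}, and the union theorem preceding this statement), exactly as Theorem \ref{xifproperties} was deduced from the $\xir{r}$-results. The key tool is Corollary \ref{mrf-equals-lim}, which says $\mr_f^+(G) = \lim_{r\to\infty} \mrr{r}^+(G)/r$ and that the limit exists; dividing an $r$-fold inequality by $r$ and passing to the limit then transfers it to the fractional setting.

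First, for claim (i): by Observation \ref{mrr-geq-alpha}, $\mrr{r}^+(G) \geq r\cdot\alpha(G)$, so $\mrr{r}^+(G)/r \geq \alpha(G)$ for every $r$; taking the limit as $r\to\infty$ gives $\mr_f^+(G) \geq \alpha(G)$. For claim (ii): by Proposition \ref{mrr-subgraph}, if $H$ is an induced subgraph of $G$ then $\mrr{r}^+(H) \leq \mrr{r}^+(G)$, hence $\mrr{r}^+(H)/r \leq \mrr{r}^+(G)/r$ for all $r$, and passing to the limit yields $\mr_f^+(H) \leq \mr_f^+(G)$. For claim (iv): by the union theorem, $\mrr{r}^+(G) \leq \sum_{i=1}^t \mrr{r}^+(G_i)$, so $\mrr{r}^+(G)/r \leq \sum_{i=1}^t \mrr{r}^+(G_i)/r$ for all $r$; since each of the finitely many terms converges, the limit of the sum is the sum of the limits, giving $\mr_f^+(G) \leq \sum_{i=1}^t \mr_f^+(G_i)$.

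Claim (iii) is the one requiring slightly more care and is the place I expect the only real subtlety. Proposition \ref{mrr-dcup} gives the \emph{exact} equality $\mrr{r}^+(G) = \sum_{i=1}^t \mrr{r}^+(G_i)$ for a disjoint union, so dividing by $r$ and taking $r\to\infty$ (again using that a finite sum of convergent sequences converges to the sum of the limits, via Corollary \ref{mrf-equals-lim} applied to $G$ and to each $G_i$) yields $\mr_f^+(G) = \sum_{i=1}^t \mr_f^+(G_i)$. The only thing to notice is that the finiteness of $t$ is essential for interchanging the limit and the sum; since $t$ is fixed and finite this is immediate. In all four cases the existence of the limits is guaranteed by subadditivity of $\mrr{r}^+$ (Lemma \ref{mrrsubadditive}) together with Fekete's Lemma (Lemma \ref{fgtA41}), already packaged as Corollary \ref{mrf-equals-lim}, so no new estimates are needed; the argument is entirely a matter of dividing known $r$-fold relations by $r$ and passing to the limit, which is why the full proof is omitted in the text.
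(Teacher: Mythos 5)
Your proposal is correct and matches the paper's intended argument exactly: the paper omits the proof as ``analogous to that of Theorem \ref{xifproperties},'' which is precisely the strategy of dividing each $r$-fold relation (Observation \ref{mrr-geq-alpha}, Proposition \ref{mrr-subgraph}, Proposition \ref{mrr-dcup}, and the union theorem) by $r$ and passing to the limit via Corollary \ref{mrf-equals-lim}. Your remark that finiteness of $t$ justifies interchanging the limit and the sum in parts (iii) and (iv) is a correct and appropriate observation.
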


\smallskip
Let $G$ be a connected graph of order at least two. A standard technique for computing the minimum positive semidefinite rank of $G$ is cut-vertex reduction \cite{BHH11, HLA2ch46, Hol09}: Suppose that $v \in V(G)$ is a cut-vertex and $(G - v)$ has connected components $\{H_i\}_{i=1}^t$. For each $i$, let $G_i$ be the subgraph of $G$ induced by the union of the vertices of $H_i$ with $v$, that is, $G_i = G[V(H_i) \cup \{v\}]$. Then $\mr^+(G) = \sum_{i=1}^t \mr^+(G_i)$. Unfortunately, this technique does not carry over to the $r$-fold case when $r > 1$, as the following example shows.

\begin{Example} {\rm
Consider the graph $G = P_4$, the path on 4 vertices, with $V(G) = \{x,y,v,z\}$ in path order; recall from Example \ref{ex-mrr-of-P4} that $\mrr{r}^+(G) = 2r+1$ for any $r \in \mathbb{N}$. Taking $v$ as a cut-vertex, we have $G_1 = P_3$ with $V(G_1) = \{x,y,v\}$ and $G_2 = P_2$ with $V(G_2) = \{v, z\}$. Fix $r > 1$. Since $\alpha(G_1) = 2$, any valid $(d;r)$-FOSR for $G_1$ must have $d \geq 2r$. Further, it is easy to see that $\mr^+(G_1) = 2$, so $4 \leq \mrr{r}^+(G_1) \leq 2 \cdot \mr^+(G_1) = 2r$. Hence equality holds and $\mrr{r}^+(G_1) = 2r$. Next, since $\mr^+(G_2) = 1$ and $d \geq r$ for any valid $(d;r)$-FOSR, we have $r \leq \mrr{r}^+(G_2) \leq r \cdot \mr^+(G_2) = r$, so $\mrr{r}^+(G_2) = r$. Hence if $r > 1$, then $\mrr{r}^+(G) = 2r+1 < 2r + r = \mrr{r}^+(G_1) + \mrr{r}^+(G_2)$, so cut-vertex reduction does not apply.
}
\end{Example}

\subsection{Fractional minimum positive semidefinite rank and projective rank} \label{subsec-equality}

Recall that any $(d;r)$-FOSR for $G$ is a $(d;r)$-OSR for $\overline{G}$, but the reverse statement does not apply in general. It thus follows that $\xir{r}(\overline{G}) \leq \mrr{r}^+(G)$ for any graph $G$ and $r \in \mathbb{N}$, and the next example demonstrates that this inequality can be strict.

\begin{Example} { \rm
Consider the graph $G = P_4$ with $V(P_4) = \{1,2,3,4\}$ and $E(P_4) = \{12, 23, 34 \}$ and fix $r \in \mathbb{N}$. Since $\omega(P_4) = 2$, we have $\xir{r}(P_4) \geq 2r$. With $\bv{e}_i$ as the $i^{th}$ standard basis vector for $\mathbb{C}^{2r}$, it is easy to verify that the following is a $(2r;r)$-OSR for $P_4$: $S_1 = S_3 = \range([\bv{e}_1, \, \bv{e}_2, \, \ldots, \, \bv{e}_{r}])$, $S_2 = S_4 = \range([ \bv{e}_{r+1}, \, \bv{e}_{r+2}, \, \ldots, \, \bv{e}_{2r}])$. Therefore, $\xir{r}(P_4) = 2r$. Since $\overline{P_4} = P_4$ and $\mrr{r}^+(P_4) = 2r+1$ (Example \ref{ex-mrr-of-P4}), we have $2r = \xir{r}(\overline{P_4}) < \mrr{r}^+(P_4) = 2r+1$.
}
\end{Example}

Recall from Corollary \ref{xifleqmrf} that $\xi_f(\overline{G}) \leq \mr_f^+(G)$ for any graph $G$. While strict inequality may hold in the $r$-fold case for an arbitrary graph $G$, we now demonstrate that equality always holds in the ``fractional case" for any graph $G$. For this result, we require the following lemma.

\begin{Lemma} \label{repswithineps}
Let $G$ be a graph with complement $\overline{G}$. Let $\{P_u\}_{u \in V(G)}$ be a $d/r$-representation for $\overline{G}$ and let $\{R_u\}_{u \in V(G)}$ be a faithful $b/1$-representation for $G$. Then for any $k \in \mathbb{N}$, $G$ has a faithful $(kd+b)/(kr+1)$-representation $\{Q_u\}_{u \in V(G)}$. Further, given any $\eps > 0$, $k$ can be chosen such that $\left| \frac{d}{r} - \frac{kd+b}{kr+1} \right| < \eps$, i.e., the value of the faithful representation $\{Q_u\}$ for $G$ is within $\eps$ of the value of the (non-faithful) representation $\{P_u\}$ for $\overline{G}$.
\end{Lemma}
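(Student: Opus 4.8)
The plan is to build the faithful representation $\{Q_u\}$ for $G$ as a direct sum of $k$ copies of the $d/r$-representation $\{P_u\}$ for $\overline{G}$ together with one copy of the faithful $b/1$-representation $\{R_u\}$ for $G$: set
\[ Q_u = P_u \oplus P_u \oplus \cdots \oplus P_u \oplus R_u \in \mathbb{C}^{(kd+b) \times (kd+b)}, \]
with $k$ copies of $P_u$. Since a direct sum of orthogonal projectors is again an orthogonal projector and ranks add, $Q_u$ is Hermitian, idempotent, and has rank $kr+1$, so $\{Q_u\}$ is a candidate $(kd+b)/(kr+1)$-representation for $G$. The work is in checking the faithfulness condition $Q_u Q_v = 0 \iff uv \notin E(G)$.

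For that, I would argue both directions. If $uv \notin E(G)$, then $uv \in E(\overline{G})$, so $P_u P_v = 0$ for each copy; and $uv \notin E(G)$ with $\{R_u\}$ faithful for $G$ gives $R_u R_v = 0$; hence $Q_u Q_v = (P_uP_v)\oplus\cdots\oplus(P_uP_v)\oplus(R_uR_v) = 0$. Conversely, if $uv \in E(G)$, then since $\{R_u\}$ is a \emph{faithful} representation for $G$ we have $R_u R_v \neq 0$, and because the product of block-diagonal matrices is block-diagonal with the product of corresponding blocks, the last block of $Q_u Q_v$ is $R_u R_v \neq 0$, so $Q_u Q_v \neq 0$. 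This is exactly where faithfulness of $\{R_u\}$ is essential: the copies of $\{P_u\}$ alone would only give a (non-faithful) $d/r$-representation for $\overline{G}$, which need not detect edges of $G$; the single faithful copy $\{R_u\}$ supplies the ``only if'' direction. Thus $\{Q_u\}$ is a faithful $(kd+b)/(kr+1)$-representation for $G$, as claimed.

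For the approximation statement, compute
\[ \left| \frac{d}{r} - \frac{kd+b}{kr+1} \right| = \left| \frac{d(kr+1) - r(kd+b)}{r(kr+1)} \right| = \frac{|d - rb|}{r(kr+1)}, \]
which tends to $0$ as $k \to \infty$ since the numerator is a fixed constant and the denominator grows linearly in $k$. Hence given $\eps > 0$ one simply chooses $k$ large enough that $\frac{|d-rb|}{r(kr+1)} < \eps$, and the value $\frac{kd+b}{kr+1}$ of $\{Q_u\}$ for $G$ is within $\eps$ of the value $\frac{d}{r}$ of $\{P_u\}$ for $\overline{G}$.

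There is no serious obstacle here; the only point requiring care is the block-diagonal bookkeeping in the ``$\iff$'' for faithfulness — specifically making sure that no cancellation between blocks can occur (it cannot, since the blocks of a product of block-diagonal matrices are independent), and recognizing that the faithful copy $\{R_u\}$ is the component that forces edges of $G$ to be detected while the $P_u$-copies are what drive the value toward $\tfrac{d}{r}$. Everything else is the routine verification that a direct sum of projectors is a projector of the summed rank, plus the elementary estimate above.
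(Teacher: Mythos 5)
Your proposal is correct and follows essentially the same route as the paper: the same direct sum $Q_u = \bigl(\bigoplus_{i=1}^k P_u\bigr) \oplus R_u$, the same use of the faithful copy $\{R_u\}$ to force $Q_uQ_v \neq 0$ on edges, and the same computation $\bigl|\tfrac{d}{r} - \tfrac{kd+b}{kr+1}\bigr| = \tfrac{|d-rb|}{r(kr+1)} \to 0$. Your explicit justification of the ``only if'' direction via the last diagonal block is a slightly more detailed write-up of the step the paper states in one line.
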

\begin{proof}
Since $\{P_u\}$ is a $d/r$-representation for $\overline{G}$, we have $P_u \in \mathbb{C}^{d \times d}$ with $\rank P_u = r$ for each $u \in V(\overline{G}) = V(G)$, and $P_u P_v = 0$ if $uv \in E(\overline{G})$, so $P_u P_v = 0$ if $uv \notin E(G)$.

Let $\eps > 0$ be arbitrary and choose $k > \left( \frac{|d-rb|}{r^2 \eps} - \frac{1}{r} \right)$.

\smallskip
For each vertex $u \in V(G)$, let $Q_u \in \mathbb{C}^{(kd+b) \times (kd+b)}$ be the block diagonal matrix constructed from $k$ copies of $P_u$ and one copy of $R_u$, i.e.,
\[ Q_u = \left(\bigoplus_{i=1}^k P_u \right) \oplus R_u. \]
We see immediately that $\rank Q_u = kr+1$, and since $P_u$ and $R_u$ are projectors, so is $Q_u$. Since $P_u P_v = 0$ if $uv \notin E(G)$ and $R_u R_v = 0$ if \emph{and only if} $uv \notin E(G)$, we conclude that $Q_u Q_v = 0$ if and only if $uv \notin E(G)$. Therefore, $\{Q_u\}_{u \in V(G)}$ is a faithful $(kd+b)/(kr+1)$-representation for $G$, which verifies the first claim.

By choice of $k$, we have $kr+1 > \frac{|d-rb|}{r \eps}$. Consider
\begin{align*}
\left| \frac{d}{r} - \frac{kd+b}{kr+1} \right|
	&=	\left| \frac{d(kr+1) - r(kd+b)}{r(kr+1)} \right| \\
	&=	\frac{|d-rb|}{r} \cdot \frac{1}{kr+1} \\
	&<	\frac{|d-rb|}{r} \cdot \frac{r\eps}{|d-rb|} \\
	&=	\eps,
\end{align*}
which verifies the second claim. \qquad
\end{proof}

It was previously noted that any faithful $d/r$-representation for $G$ is also $d/r$-representation for $\overline{G}$. Lemma \ref{repswithineps} is a partial converse in the sense that, given any $d/r$-representation for $\overline{G}$, we can construct a faithful $d_1/r_1$-representation for $G$ such that the two representations have essentially the same value. This yields the next result.

\begin{Theorem}
For every graph $G$ with complement $\overline{G}$,
\[ \xi_f(\overline{G}) = \mr_f^+(G). \]
\end{Theorem}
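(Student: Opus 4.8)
The plan is to prove the two inequalities $\xi_f(\overline{G}) \leq \mr_f^+(G)$ and $\mr_f^+(G) \leq \xi_f(\overline{G})$ separately. The first direction is already recorded as Corollary \ref{xifleqmrf}, since every faithful $d/r$-representation for $G$ is in particular a $d/r$-representation for $\overline{G}$, hence $\xi_f(\overline{G}) \leq \mr_f^+(G)$ immediately. So the real content is the reverse inequality, and this is where Lemma \ref{repswithineps} does the work.

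For $\mr_f^+(G) \leq \xi_f(\overline{G})$, I would first dispose of a degenerate case: if $\overline{G}$ has no edges (equivalently $G$ is complete), one can argue directly, or simply note that a faithful $b/1$-representation for $G$ exists for some $b$ (take $b = |V(G)|$, using a faithful orthogonal representation of $G$, which exists since $G$ has at least the edges making it complete — or handle isolated-vertex subtleties as the paper does elsewhere). Fix such a faithful $b/1$-representation $\{R_u\}$ for $G$ once and for all. Now let $\eps > 0$ be arbitrary. By definition of $\xi_f(\overline{G})$ as an infimum over $d/r$-representations of $\overline{G}$, choose a $d/r$-representation $\{P_u\}$ for $\overline{G}$ with $\frac{d}{r} < \xi_f(\overline{G}) + \eps$. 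Apply Lemma \ref{repswithineps} with this choice and with $\eps$ as the tolerance: it produces a faithful $(kd+b)/(kr+1)$-representation for $G$ whose value satisfies $\left|\frac{d}{r} - \frac{kd+b}{kr+1}\right| < \eps$. Hence $\mr_f^+(G) \leq \frac{kd+b}{kr+1} < \frac{d}{r} + \eps < \xi_f(\overline{G}) + 2\eps$. Since $\eps > 0$ was arbitrary, $\mr_f^+(G) \leq \xi_f(\overline{G})$, and combining with the first direction gives equality.

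I expect the main obstacle to be entirely bookkeeping rather than conceptual: one must make sure a faithful $b/1$-representation for $G$ genuinely exists so that Lemma \ref{repswithineps} is applicable. This is exactly the place where the isolated-vertex convention matters — if $G$ has isolated vertices, $\mr^+(G)$ under the paper's convention equals the number of such vertices plus the contribution of the rest, and a faithful orthogonal representation in $\mathbb{C}^{|V(G)|}$ works (assign distinct standard basis vectors where needed). Since $\mr_f^+(G) \leq \mr^+(G) < \infty$ by Corollary \ref{mrfleqmr}, such a $b$ exists, so the lemma always applies. The only other subtlety is the chain of $\eps$'s: choosing the $d/r$-representation to nearly achieve the infimum and then invoking the lemma with tolerance $\eps$ gives a $2\eps$ bound, which is harmless. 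No further estimates are needed — the quantitative heavy lifting ($k$ large enough that $\frac{kd+b}{kr+1}$ is close to $\frac{d}{r}$) is already packaged inside Lemma \ref{repswithineps}.
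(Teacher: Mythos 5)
Your proposal is correct and follows essentially the same route as the paper: one direction is Corollary \ref{xifleqmrf}, and the other is the $\eps$-approximation supplied by Lemma \ref{repswithineps}, which the paper phrases compactly as $\inf R = \inf F$ for the two sets of achievable ratios. Your explicit attention to the existence of a faithful $b/1$-representation (guaranteed since $\mr^+(G) \leq |V(G)| < \infty$) is a detail the paper leaves implicit, but it does not change the argument.
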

\begin{proof}
Let
\[ R = \left\{ \frac{d}{r} : \overline{G} \text{ has a $d/r$-representation} \right\}, \]
\[ F = \left\{ \frac{d}{r} : G \text{ has a faithful $d/r$-representation} \right\}. \]
For any $\frac{d}{r} \in R$ and $\eps > 0$, Lemma \ref{repswithineps} asserts that there exists some $\frac{d_1}{r_1} \in F$ such that $\left| \frac{d}{r} - \frac{d_1}{r_1} \right| < \eps$. It follows that $\inf R = \inf F$, i.e., $\xi_f(\overline{G}) = \mr_f^+(G)$. \qquad
\end{proof}


\section*{Acknowledgements}

Some of this work was done while Leslie Hogben was a general member of the Institute for Mathematics and its Applications (IMA) and during a week-long visit of Kevin Palmowski to IMA; they thank IMA both for financial support (from NSF funds) and for providing a wonderful collaborative research environment.

David E. Roberson is supported in part by the Singapore National Research Foundation under NRF RF Award No. NRF-NRFF2013-13.

Simone Severini is supported by the Royal Society and EPSRC.


\end{document}